%
%
%
%
\documentclass{amsart}
\usepackage[utf8]{inputenc}          
\DeclareUnicodeCharacter{200E}{}     
\newtheorem{theorem}{Theorem}[section]
\newtheorem{lemma}[theorem]{Lemma}

\theoremstyle{definition}
\newtheorem{definition}[theorem]{Definition}

\theoremstyle{remark}
\newtheorem{remark}[theorem]{Remark}

\numberwithin{equation}{section}


\newcommand{\Ir}{{\bf{Ir}}}
\newcommand{\V}{{\bf{V}}}

\newcommand{\I}{{\bf{I}}}
\newcommand{\U}{{\bf{U}}}

\begin{document}

\title[]{An Algebraic Proof of the Polynomial Version of van der Waerden's Theorem}
\author{ J. Jafari}
\curraddr{Department of Pure Mathematics,
Faculty of Sciences, University of Guilan}
\email{javadjafri66@gmail.com}

\author{M. A. Tootkaboni }
\curraddr{Department of Pure Mathematics,
	Faculty of Sciences, University of Guilan}
\email{tootkaboni.akbari@gmail.com}


\subjclass{Primary 11B83, 05A17; Secondary 54D80, 22A15}



\keywords{Polynomial van der Waerden's Theorem, Stone-\v Cech compactification, Partial semigroup}

\begin{abstract}
The polynomial version of van der Waerden's theorem, proved using dynamical systems by V. Bergelson and A. Leibman in 1996, \cite{Bergelson1996}, significantly highlighted the role of dynamical systems in addressing problems related to monochromatic configurations within algebraic structures. In this paper, by introducing symbolic polynomials, we aim to provide an alternative proof of the polynomial version of van der Waerden's theorem relying solely on Stone-\v{C}ech compactification of an infinite discrete semigroup.
\end{abstract}

\maketitle



	\section{Introduction}
	
	Van der Waerden's Theorem is a foundational result in Ramsey theory, first introduced in 1927 by van der Waerden in the context of proper colorings. The classical form of the theorem states that in any finite coloring of the natural numbers, there exists a monochromatic arithmetic progression of a given length. While the statement may not seem obvious at first glance, it served as a pivotal point for the growth and development of modern combinatorics, particularly in the area of coloring and random structures.
	
	\begin{theorem}[Van der Waerden,\cite{vandw1927}]
		For every pair of natural numbers $r$ and $k$, if the natural numbers is partitioned into $r$ colors, then at least one of the color classes contains an arithmetic progression of length $k$.
	\end{theorem}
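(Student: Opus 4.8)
The plan is to carry out the proof inside the Stone--\v{C}ech compactification $\beta\mathbf{N}$ of the discrete semigroup $(\mathbf{N},+)$, realized as the space of ultrafilters on $\mathbf{N}$ with the operation extending addition that makes $\beta\mathbf{N}$ a compact right-topological semigroup. First I would invoke the structure theory of such semigroups: by Ellis's theorem $\beta\mathbf{N}$ contains an idempotent, and it possesses a smallest two-sided ideal whose idempotents are the minimal ones. Fixing a minimal idempotent $p = p + p$ and writing the given partition as $\mathbf{N} = C_1 \cup \cdots \cup C_r$, exactly one class, say $C$, satisfies $C \in p$ because $p$ is an ultrafilter. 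Such a $C$ is \emph{central}, and the theorem reduces to showing that every central set contains an arithmetic progression of length $k$.

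The engine of the reduction is the idempotent relation: if $A \in p$ then $A^\star := \{x \in A : -x + A \in p\}$ again lies in $p$, and for every $x \in A^\star$ one has $-x + A^\star \in p$. I would build the progression term by term, starting from $C^\star$ and repeatedly passing to shifted copies $-x + C^\star$, intersecting them so as to keep membership in $p$ while forcing a single common difference $d$ to work for all $k$ base points simultaneously. Here the only facts I need are that finite intersections of members of $p$ remain in $p$ and that any member of $p$ is nonempty; the finiteness of $k$ guarantees that the iterated construction terminates and yields $a, a+d, \ldots, a+(k-1)d \in C$.

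The step I expect to be the main obstacle is exactly this last extraction. A single application of $A^\star \in p$ immediately produces short configurations, but pinning down one common difference $d$ shared by all $k$ terms requires iterating the idempotent manipulation with considerable care; this is, in effect, the content of the Central Sets Theorem specialized to linear increments. I anticipate that the cleanest bookkeeping records each increment as the value of a degree-one symbolic polynomial and tracks the corresponding shifted sets through the iteration, so that the linear case here foreshadows the polynomial generalization; the genuine difficulty lies in this simultaneous control of the $k$ shifts inside the ultrafilter, not in the existence of the minimal idempotent.
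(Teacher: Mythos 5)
The reduction to a minimal idempotent $p$ and a central cell $C\in p$ is fine, but the extraction step --- the one you yourself flag as the main obstacle --- is not merely delicate: it is impossible with the toolkit you allow yourself. You assert that ``the only facts I need are that finite intersections of members of $p$ remain in $p$ and that any member of $p$ is nonempty,'' on top of the star-set property $A\in p \Rightarrow A^\star\in p$ and $-x+A^\star\in p$ for $x\in A^\star$. Those facts hold for \emph{every} idempotent, minimal or not, so a proof using only them would prove the conclusion for every idempotent --- and that conclusion is false. Concretely, let $A=FS\bigl(\langle 10^n\rangle_{n=1}^\infty\bigr)$ be the set of finite sums of distinct powers of ten. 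By the Galvin--Glazer argument there is an idempotent $p$ with $A\in p$, and every element your iteration can ever output lies in $A$; but $A$ contains no three-term arithmetic progression: if $a$, $a+d$, $a+2d$ all have digits in $\{0,1\}$ base ten, then comparing $a+(a+2d)=2(a+d)$ digit by digit (there are no carries on either side) forces $a_i=(a+2d)_i$ for every digit $i$, hence $d=0$. The star-set iteration produces finite-sums configurations $\sum_{t\in F}x_t$ over \emph{distinct} generators; it has no mechanism to force the same increment to be used twice, which is exactly what $a, a+d, a+2d$ demands. Nor can you patch this by citing ``the Central Sets Theorem specialized to linear increments'': the algebraic proof of that theorem itself contains the product-space van der Waerden argument, so this is a reduction to a strictly stronger result, not a proof.

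What is missing is precisely the structural step the paper is built around: pass to a \emph{product} of copies of the compactification and exploit the diagonal. In the paper, the classical theorem is the linear special case of Theorem \ref{van}, and the engine (Lemma \ref{asli} together with the configuration theorems of Section 4) is this: the set $\I$ of tuples $\left(x+r\bullet\eta_1,\ldots,x+r\bullet\eta_m\right)$ --- one $x$ and one $r$ for the whole tuple, so the common difference is encoded in the definition of $\I$ rather than extracted by iteration --- is an ideal of $\V=\I\cup\Delta^m_{\Ir}$; by Theorem \ref{1.23}, $K(\delta\I)=K(\delta\V)$; and the diagonal point $\overline{p}=(p,\ldots,p)$ with $p\in K(\delta S)$ lies in $K(\delta\V)\subseteq\delta\I$, so the basic neighborhood $\overline{A}\times\cdots\times\overline{A}$ of $\overline{p}$ meets $\I$, yielding the monochromatic tuple. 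Your proof needs the same skeleton in the classical setting: show that $E=\mathrm{cl}\{(a,a+d,\ldots,a+(k-1)d): a\in\mathbb{N},\ d\geq 0\}$ is a compact subsemigroup of $(\beta\mathbb{N})^k$ containing the diagonal, that $I=\mathrm{cl}\{(a,a+d,\ldots,a+(k-1)d): a,d\in\mathbb{N}\}$ is an ideal of $E$, and that consequently the minimal idempotent $(p,\ldots,p)$ of the diagonal lies in $I$, whence $\overline{C}\times\cdots\times\overline{C}$ meets the set of genuine progressions. That is where minimality actually does its work; it is the step your proposal replaces with intersection bookkeeping that cannot succeed.
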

	
	In the 1990s, a polynomial version of Van der Waerden's Theorem was introduced and proved by Bergelson and Leibman. This generalization replaces linear progressions with sequences generated by integer-valued polynomials without constant terms.
	
	\begin{theorem}[Polynomial van der Waerden, Bergelson–Leibman, \cite{Bergelson1996}]		
	Let $P_1(n)$, $P_2(n)$, $\dots,$ $P_k(n)$ be integer-valued polynomials such that $P_i(0) = 0$ for all $1 \leq i \leq k$. Then, for any finite coloring of the natural numbers, there exist $x$ and $n$ such that the set $\{x + P_1(n), x + P_2(n), \dots, x + P_k(n)\}$ is monochromatic.
	\end{theorem}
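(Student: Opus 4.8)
The plan is to replace the dynamical recurrence argument of Bergelson and Leibman with the algebra of the Stone-\v{C}ech compactification, reducing the coloring statement to a structural fact about a single large set. First I would recall the relevant machinery: extending addition to $\beta \mathbf{N}$ makes it a compact right topological semigroup, every such semigroup carries an idempotent by Ellis' theorem, and it possesses a smallest two-sided ideal $K(\beta\mathbf{N})$ whose idempotents are the minimal ones. Fixing a minimal idempotent $p = p + p$ and a finite coloring $\mathbf{N} = C_1 \cup \dots \cup C_r$, exactly one cell $C = C_j$ lies in $p$, and any member of a minimal idempotent is central. Thus the theorem follows once I show that every central set $C$ contains a configuration $\{x + P_1(n), \dots, x + P_k(n)\}$; the hypothesis $P_i(0) = 0$ will serve to anchor this configuration at the common base point $x$.

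The difficulty is that addition alone does not see the higher-degree terms $n^2, n^3, \dots$, so I would encode them by passing to a \emph{partial} semigroup of symbolic polynomials. Concretely I would build a partial semigroup $T$ whose elements record a prospective increment $n$ together with the induced shifts $P_1(n), \dots, P_k(n)$, with a partial product defined on pairs whose supports are compatible and realizing the substitution $n \mapsto n + m$. If this operation is associative where defined and $T$ is adequate, then its compactification $\delta T \subseteq \beta T$ is again a compact right topological semigroup, so it contains idempotents; I would select an idempotent $q = q \cdot q$ lying over the additive idempotent $p$. The equation $q = q \cdot q$ then expresses, at the level of ultrafilters, that an increment may be split into two compatible pieces whose polynomial values combine correctly --- it is exactly here that the polynomial identities and the normalization $P_i(0)=0$ are consumed.

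With $q$ in hand, the remaining task is to extract the monochromatic configuration from membership of $C$ in the projection of $q$, and this I would carry out by induction on the complexity of the family $\{P_1, \dots, P_k\}$, ordered lexicographically by maximal degree and by the number of polynomials attaining it --- the algebraic incarnation of PET (polynomial exhaustion) induction. The base case, in which all the $P_i$ are linear, is precisely the classical van der Waerden theorem (Theorem 1.1), whose own idempotent proof I would reuse. In the inductive step I would use the self-similarity $q = q \cdot q$ to replace $n$ by $n + m$ and expand each $P_i(n + m)$; the cross terms produce a family of strictly lower complexity to which the inductive hypothesis applies, while the idempotent keeps every intermediate quantity inside $C$.

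The \textbf{main obstacle} I anticipate is the faithful algebraic encoding itself: arranging the support and compatibility conditions on $T$ so that its partial product is associative and $T$ is adequate, so that $\delta T$ really is a semigroup, and so that an idempotent of $\delta T$ descends to the additive idempotent $p$ while still realizing the substitution $n \mapsto n+m$ uniformly across all degrees. This construction --- the ``symbolic polynomials'' --- is what must carry the entire weight of the argument in place of the topological dynamics, and the PET induction step, where a single idempotent must simultaneously control polynomials of several degrees, is where I expect the real work to lie.
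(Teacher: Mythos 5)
Your setup agrees with the paper's only up to the scaffolding: both introduce a space of symbolic polynomials, make it an adequate \emph{partial} semigroup, and pass to its compactification. But the paper's extraction mechanism is not the one you propose, and the difference is exactly where your plan has its hole. The paper never chooses an idempotent lying over an additive idempotent of $\beta\mathbb{N}$ and never runs PET induction. Its central observation is that the symbolic encoding \emph{linearizes} the polynomials: the action $r\bullet\eta$ is additive in $r$ (Definition \ref{3.6} and the lemma following it), so the prospective configuration $\{x+r\bullet\eta_1,\dots,x+r\bullet\eta_m\}$ lives in a structure where every "polynomial" is degree one. The proof is then the classical smallest-ideal/diagonal argument for linear van der Waerden, transplanted to partial semigroups: the tuples $\I=\{(x+r\bullet\eta_1,\dots,x+r\bullet\eta_m)\}$ form an ideal of $\V=\I\cup\Delta_{\Ir}^m$, so $K(\delta\I)=K(\delta\V)$ by Theorem \ref{1.23}; a minimal ultrafilter $p$ containing a piecewise syndetic cell yields a diagonal $(p,\dots,p)\in K(\delta\V)\subseteq\overline{\I}$, forcing $\I\cap\bigl(\times_{i=1}^m A\bigr)\neq\emptyset$ in one shot (Lemma \ref{asli} and the first theorem of Section 4). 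Nonlinearity re-enters only at the very last step, when the projection $\pi$ of Definition \ref{pi}, which \emph{multiplies} symbol coefficients, sends $x+r\bullet\eta_j$ to $\pi(x)+p_j(r)$ (Theorem \ref{van}). That is what the symbolic space buys: the induction on polynomial complexity is not reorganized, it is eliminated.

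The genuine gap in your proposal is that its load-bearing step is deferred, and it would not follow from what you have set up. The assertion that "$q=q\cdot q$ keeps every intermediate quantity inside $C$" while the cross terms of $P_i(n+m)$ are handled by an inductive hypothesis is precisely the content of the Bergelson--Leibman/McCutcheon argument, not a consequence of having an idempotent: the cross terms are polynomials in the two variables $n,m$, so the inductive hypothesis you need is a multiparameter (IP/VIP-system) statement, uniform over the color class, and organizing this is the whole of PET induction in the ultrafilter setting. Separately, the existence of an idempotent $q\in\delta T$ "lying over" a prescribed minimal idempotent $p\in\beta\mathbb{N}$ is not free; it requires exhibiting a continuous homomorphism-like projection under which the fiber of $p$ is a nonempty compact subsemigroup of $\delta T$, which you do not address. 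So as written your proposal is a roadmap to the known ultrafilter rendering of the dynamical proof with its hardest leg missing, whereas the paper's proof takes a different route on which that leg does not exist: linearize first, apply the smallest-ideal diagonal argument to the linearized system, and only then project back to $\mathbb{Z}$ by $\pi$ to recover the polynomial values.
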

	
	 The original proof relied heavily on tools from ergodic theory and dynamical systems, highlighting the theoretical depth and the necessity of advanced methods in understanding rich coloring behaviors.
	
	Since then, various alternative proofs, generalizations, and versions have emerged, including more combinatorial proofs, multidimensional and topological variants.
	
	 The primary goal of this work is to provide a method based solely on pure combinatorial tools, while still preserving the original theorem's proving strength.

In the second part of this paper, we briefly review Stone-\v Cech compactification, partial semigroups, and some of their properties. Subsequently, in Chapter 3, we introduce the space of symbolic polynomials, which plays a central role in this work. This chapter consists of two subsections devoted to the introduction of one variable and multi variable symbolic polynomials, respectively. In this section, we show that every symbolic polynomial is associated with a polynomial in $\mathbb{Z}[x]$. This correspondence facilitates establishing a connection between the structure of color patterns in the integers and the space of symbolic polynomials. Finally, we present the proof of van der Waerden's theorem for symbolic polynomials, and we prove the main theorems as a consequence.

\section{Preliminary}
In this section, we state some essential concepts that are necessary for our work. First, we concentrate on the concepts of the Stone-\v Cech compactification and the adequate partial semigroup. For more details, see \cite{Hindman},\cite{McCutcheon}, \cite{McLeod} and \cite{McLeod1}.

\subsection{The Stone-\v Cech Algebra} 
For \(A \subseteq S\), we define $\overline{A} = \{p \in \beta S \mid A \in p\}$. The collection \(\{\overline{A} \mid A \subseteq S\}\) forms a basis for a topology on \(\beta S\), and $\beta S$ is a compact Hausdorff space respect the topology and $\beta S$ is called the Stone-$\check{C}$ech compactification of \(S\).

The operation \("\cdot"\) can be uniquely extended to \(\beta S\) and \((\beta S, \cdot)\) becomes a compact right topological semigroup. This means that for any \(p \in \beta S\), the function \(r_p \colon \beta S \to \beta S\) defined by \(r_p(q) = q \cdot p\) is continuous, with \(S\) contained in its topological center, i.e., for any \(x \in S\), the function \(\lambda_x \colon \beta S \to \beta S\) defined by \(\lambda_x(q) = x \cdot q\) is continuous. For \(p, q \in \beta S\) and \(A \subseteq S\), \(A \in p \cdot q\) if and only if $\{x \in S \mid x^{-1} \cdot A \in q\} \in p$,  where \(x^{-1}  A = \{y \in S \mid x \cdot y \in A\}\).

A nonempty subset \(I\) of a semigroup \((S, \cdot)\) is called a {left ideal} of \(S\) if \(\{sa:s\in S,a\in I\}=S \cdot I \subseteq I\), a {right ideal} if \(I \cdot S \subseteq I\), and a {two-sided ideal} (or simply an {ideal}) if it is both a left and a right ideal. A {minimal left ideal} is a left ideal that does not contain any proper left ideal. Similarly, we can define a {minimal right ideal}. An element $x\in S$ is an idempotent if and only if $ xx = x$ and, is called a minimal idempotent if $x$ belongs to a minimal left ideal.

\subsection{Adequate Partial Semigroups} 

We begin by introducing several fundamental concepts essential to our work. First, we focus on the notion of partial semigroup. Further details can be found in \cite{Hindman,McCutcheon}.  Let $S$ be a non-empty set, and let $*$ be a binary operation defined binary operation with domain $ D \subseteq S \times S $. The pair \((S, *)\) is called a \emph{partial semigroup} if, for all $ x, y, z \in S $, the associativity condition $(x * y)* z = x * (y * z)$ holds in the sense that if either side is defined, then the other is also defined and they are equal.

We say that $ x* y$ is defined if \( (x, y) \in D \). For every \( x \in S \), we define $R_S(x) = \{s \in S : x * s \hbox{ is defined }\}$  
and $L_S(x) = \{s \in S : s * x \hbox{ is defined}\}$. 
For \( s \in S \), we write \( S * s = \{t * s : t \in S\} \). Note that $S * s = L_S(s) * s $, and the notation should not cause confusion. In fact, we have $S* s=L_S(s)* s$. A nonempty subset \( I \subseteq S \) is called a \emph{left ideal} of $ S $ if $ y * x \in I $ for all $ x \in I $ and $ y \in L_S(x) $. Similarly, $ I \subseteq S $ is called a \emph{right ideal} if $ x * y \in I $ for all $ y \in R_S(x) $ and $ x \in I $. We say that $ I $ is an \emph{ideal} if it is both a left ideal and a right ideal. A subset $ L \subseteq S $ is called a \emph{minimal left ideal} if $ L $ is a left ideal of $ S $, and whenever $ J\subseteq L $ is a left ideal of $ S $,  we have $ J = L $. The notion of a minimal right ideal is defined analogously. An element $ p \in S $ is called an \emph{idempotent} if $ p * p = p $, and the collection of all idempotents is denoted by $ E(S) $.

We now recall some basic properties of partial semigroups; see, for instance, \cite{Hindman}.
\begin{definition}
	Let \((S, *)\) be a partial semigroup.
	\begin{itemize}
		\item[(a)] For \( H \in P_f(S) \), we define \( R_S(H) = \bigcap_{s \in H} R_S(s) \).	
		\item[(b)] We say that \((S, *)\) is \emph{adequate} if \( R_S(H) \neq \emptyset \) for all \( H \in P_f(S) \).
		\item[(c)] Define $\delta S = \bigcap_{H \in P_f(S)} \overline{R_S(H)}$.
	\end{itemize}
\end{definition}

The set $\delta S \subseteq \beta S$ is a compact right topological semigroup, as shown in Theorem 2.10 of \cite{McCutcheon}. If \((S, *)\) is a partial semigroup, for each \( s \in S \) and \( A \subseteq S \), we define 
\[
s^{-1}A = \{t \in R_S(s) : s * t \in A\}.
\]

\begin{definition}
	Let \( (S, *) \) be a right partial semigroup.
	\begin{itemize}
		\item[(a)] For \( a \in S \) and \( q \in \overline{R_S(a)} \), define
		\[
		a * q = \{A \subseteq S \colon a^{-1}A \in q\}.
		\]
		\item[(b)] For \( p, q \in \beta S \), define
		\[
		p * q = \{A \subseteq S \colon \{a \in S \colon a^{-1}A \in q\} \in p\}.
		\]
	\end{itemize}
\end{definition}
By Lemma 2.7 in \cite{McCutcheon}, if \( (S, *) \) is an adequate partial semigroup, then for \( a \in S \) and \( q \in \overline{R_S(a)} \), we have \( a * q \in \beta S \). Moreover, for \( p \in \beta S \), \( q \in \delta S \), and \( a \in S \), it holds that \( R_S(a) \in p * q \) whenever \( R_S(a) \in p \). Additionally, for every \( p, q \in \delta S \), we have \( p * q \in \delta S \).
\begin{lemma} \label{1.10} 
	Let \( T \) be an adequate partial semigroup, and let \( S \subseteq T \) be an adequate partial semigroup under the operation inherited from \( T \). Then the following statements are equivalent:
	\begin{itemize}
		\item[(a)] \( \delta S \subseteq \delta T \).
		\item[(b)] For all \( y \in T \), there exists \( H \in P_f(S) \) such that  
		\( \bigcap_{x \in H} R_S(x) \subseteq R_T(y) \).
		\item[(c)] For all \( F \in P_f(T) \), there exists \( H \in P_f(S) \) such that  
		\( \bigcap_{x \in H} R_S(x) \subseteq \bigcap_{x \in F} R_T(x) \).
	\end{itemize}
\end{lemma}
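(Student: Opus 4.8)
The plan is to establish the cycle of implications (a) $\Rightarrow$ (b) $\Rightarrow$ (c) $\Rightarrow$ (a), after first pinning down the sense in which $\delta S$ is a subset of $\beta T$. Since $S \subseteq T$, I would identify each ultrafilter $p \in \beta S$ with the ultrafilter $\tilde p = \{A \subseteq T : A \cap S \in p\}$ on $T$; under this identification $\beta S$ becomes the closure of $S$ in $\beta T$, and (a) is the assertion that $\tilde p \in \delta T$ for every $p \in \delta S$. I would also record the elementary fact that, because $S$ carries the operation inherited from $T$, one has $R_S(x) = R_T(x) \cap S$ for each $x \in S$; this is what allows the families $\{R_S(H)\}$ and $\{R_T(F)\}$ to be compared inside $T$.

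The implications (b) $\Rightarrow$ (c) and (c) $\Rightarrow$ (a) form the routine half of the cycle. For (b) $\Rightarrow$ (c), given $F = \{y_1, \dots, y_n\} \in P_f(T)$, I would pick for each $y_i$ a set $H_i \in P_f(S)$ with $\bigcap_{x \in H_i} R_S(x) \subseteq R_T(y_i)$ and set $H = \bigcup_{i=1}^{n} H_i$; since $H_i \subseteq H$ yields $\bigcap_{x \in H} R_S(x) \subseteq \bigcap_{x \in H_i} R_S(x) \subseteq R_T(y_i)$ for each $i$, intersecting over $i$ gives $\bigcap_{x \in H} R_S(x) \subseteq \bigcap_{x \in F} R_T(x)$. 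For (c) $\Rightarrow$ (a), I would fix $p \in \delta S$ and $F \in P_f(T)$; by (c) there is $H \in P_f(S)$ with $\bigcap_{x \in H} R_S(x) \subseteq \bigcap_{x \in F} R_T(x)$, and since $\bigcap_{x \in H} R_S(x) \subseteq S$ belongs to $p$ (because $p \in \delta S$), upward closure of $p$ forces $\bigl(\bigcap_{x \in F} R_T(x)\bigr) \cap S \in p$, that is $\bigcap_{x \in F} R_T(x) \in \tilde p$. As $F$ is arbitrary, $\tilde p \in \delta T$.

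The substantive step, and the one I expect to be the main obstacle, is (a) $\Rightarrow$ (b), which I would argue by contraposition. Suppose (b) fails, so there is $y \in T$ with $\bigcap_{x \in H} R_S(x) \not\subseteq R_T(y)$ for every $H \in P_f(S)$. Writing $B = S \setminus R_T(y)$, this says precisely that $\bigl(\bigcap_{x \in H} R_S(x)\bigr) \cap B \neq \emptyset$ for all $H \in P_f(S)$. The crucial observation is that the family $\{R_S(H) : H \in P_f(S)\} \cup \{B\}$ has the finite intersection property: any finite intersection of the sets $R_S(H)$ is again $R_S(H')$ for $H'$ the union of the relevant finite sets, which is nonempty by adequacy of $S$, and its intersection with $B$ is nonempty by the standing assumption. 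I would then extend this family to an ultrafilter $p$ on $S$ via the ultrafilter lemma. By construction $R_S(H) \in p$ for all $H \in P_f(S)$, so $p \in \delta S$; yet $B \in p$ forces $R_T(y) \cap S \notin p$, whence $R_T(y) \notin \tilde p$ and therefore $\tilde p \notin \delta T$. This exhibits a point of $\delta S$ lying outside $\delta T$, contradicting (a). The only delicate points are maintaining the $\beta S \hookrightarrow \beta T$ identification and checking the finite intersection property, both of which become transparent once everything is reduced to the single complement set $B$.
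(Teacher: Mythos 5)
Your proof is correct. Note that the paper itself gives no argument for this lemma---its ``proof'' is simply a citation to McLeod's thesis---so there is no internal proof to compare against; your cycle (a) $\Rightarrow$ (b) $\Rightarrow$ (c) $\Rightarrow$ (a) is the standard argument, and essentially the one in the cited source. The two substantive points are exactly where you located them: first, the identification of $\beta S$ with $\mathrm{cl}_{\beta T}(S)$ via $p \mapsto \tilde p = \{A \subseteq T : A \cap S \in p\}$, without which statement (a) is not even meaningful; second, the finite-intersection-property construction in the contrapositive of (a) $\Rightarrow$ (b), where adequacy of $S$ guarantees each $R_S(H)$ is nonempty and the family is directed under intersection (since $R_S(H_1) \cap R_S(H_2) = R_S(H_1 \cup H_2)$), so that $\{R_S(H) : H \in P_f(S)\} \cup \{B\}$ with $B = S \setminus R_T(y)$ generates a filter, and any ultrafilter refining it lies in $\delta S$ but outside $\overline{R_T(y)} \supseteq \delta T$. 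One small caution: the ``elementary fact'' you record, $R_S(x) = R_T(x) \cap S$, need not be an equality under the weakest reading of ``operation inherited from $T$'' (one gets only $R_S(x) \subseteq R_T(x) \cap S$ unless $S$ is closed under the ambient operation); fortunately your argument never uses the equality---everywhere you need only that $R_S(H) \subseteq S$ and that inclusions of subsets of $T$ can be compared---so nothing breaks.
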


\begin{proof}
	See \cite{McLeod}.
\end{proof}

\begin{definition} \label{1.11}
	Let \( T \) be a partial semigroup. A subset \( S \subseteq T \) is called an \textbf{adequate partial subsemigroup} of \( T \) if \( S \) is an adequate partial semigroup under the operation inherited from \( T \), and for every \( y \in T \), there exists \( H \in P_f(S) \) such that
	\[
	\bigcap_{x \in H} R_S(x) \subseteq R_T(y).
	\]
\end{definition}

\begin{theorem} \label{1.23}
	Let \( T \) be an adequate partial semigroup, and let \( S \) be an adequate partial subsemigroup of \( T \). Assume further that \( S \) is an ideal of \( T \). Then \( \delta S \) is an ideal of \( \delta T \). In particular, \( K(\delta S) = K(\delta T) \).
\end{theorem}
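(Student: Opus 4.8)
The plan is to prove Theorem \ref{1.23} in two stages: first establish that $\delta S$ is an ideal of $\delta T$, and then deduce the equality of smallest ideals $K(\delta S) = K(\delta T)$ as a formal consequence. Since $S$ is an adequate partial subsemigroup of $T$, condition (b) of Lemma \ref{1.10} holds, so by the equivalence in that lemma we immediately get $\delta S \subseteq \delta T$; this inclusion will be used repeatedly. The main work is to show that $\delta S$ absorbs multiplication from $\delta T$ on both sides, i.e. that $\delta T * \delta S \subseteq \delta S$ and $\delta S * \delta T \subseteq \delta S$.

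First I would verify the right ideal property: given $p \in \delta S$ and $q \in \delta T$, I want to show $p * q \in \delta S$. The key is to translate the ideal hypothesis on $S$ into a statement about ultrafilters. Since $S$ is a right ideal of $T$, for $x \in S$ and $y \in R_T(x)$ we have $x * y \in S$; I would use this together with the definition $p * q = \{A : \{a : a^{-1}A \in q\} \in p\}$ to show that every basic neighborhood $\overline{R_S(H)}$ of points in $\delta S$ lies in $p * q$. Concretely, because $p \in \delta S$, each set $R_S(H)$ is a member of $p$, and the right-ideal condition forces $S \in p * q$ together with membership in each $\overline{R_S(H)}$, placing $p * q$ in $\bigcap_{H \in P_f(S)} \overline{R_S(H)} = \delta S$. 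For the left ideal property I would argue symmetrically using that $S$ is a left ideal of $T$, taking $p \in \delta T$ and $q \in \delta S$ and checking $p * q \in \delta S$; here the subsemigroup condition guaranteeing $\delta S \subseteq \delta T$ ensures the products are well-defined in $\delta S$ via the remarks following Definition \ref{1.11}.

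The hard part will be handling the partiality of the operation carefully: unlike the full-semigroup case, $a * q$ is only defined when $q \in \overline{R_S(a)}$, so I must confirm at each step that the relevant products actually lie in the appropriate $\delta$-set and that membership of the witnessing sets $R_S(H)$ in the ultrafilters is preserved under the extended operation. The remarks after Definition \ref{1.11} — that $R_S(a) \in p * q$ whenever $R_S(a) \in p$, and that $\delta S$ is closed under $*$ — are exactly the tools that let me sidestep the domain issues, and I expect the proof to hinge on invoking them at the right moments rather than on any deep new computation.

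Finally, once $\delta S$ is known to be a two-sided ideal of the compact right topological semigroup $\delta T$, the equality $K(\delta S) = K(\delta T)$ follows from the standard fact that if $I$ is a two-sided ideal of a compact right topological semigroup, then $I$ contains the smallest ideal $K$ and $K(I) = K$; applying this with $I = \delta S$ yields $K(\delta T) \subseteq \delta S$ and $K(\delta S) = K(\delta T)$. I would cite the corresponding result from \cite{Hindman} for this last step, so that the only genuinely new content is the verification of the two-sided ideal property above.
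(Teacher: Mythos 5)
Your overall strategy is sound, but note first that the paper itself offers no argument for Theorem \ref{1.23}: its proof is the single line ``See \cite{McLeod1}.'' So the comparison here is really with the cited source rather than with anything in the paper, and your outline is the standard route: obtain $\delta S \subseteq \delta T$ from Lemma \ref{1.10}, verify by ultrafilter-membership computations that $\delta S$ absorbs multiplication by $\delta T$ on both sides, and finish with the purely algebraic facts that the smallest ideal of a semigroup is contained in every two-sided ideal and that $K(T') = T' \cap K(S')$ for any subsemigroup $T'$ meeting $K(S')$ (Theorem 1.65 of \cite{Hindman}), which give $K(\delta S) = \delta S \cap K(\delta T) = K(\delta T)$.

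The one place your sketch needs genuine repair is the claim that the left-ideal half is ``symmetric.'' It is not. For the right-ideal half ($p \in \delta S$, $q \in \delta T$, product $p * q$) the witnessing set of $a$'s can be taken inside $S$: for each $H \in P_f(S)$ one checks $R_S(H) \subseteq \{a : a^{-1}R_S(H) \in q\}$, because for $a \in R_S(H)$ one has $R_T(\{a\} \cup \{x * a : x \in H\}) \subseteq a^{-1}R_S(H)$ (associativity plus the right-ideal property of $S$), and this $R_T$-set lies in $q$ since $q \in \delta T$. For the left-ideal half ($p \in \delta T$, $q \in \delta S$, product $p * q$) you cannot mirror this choice: $p$ need not contain $S$, let alone any $R_S$-set; moreover, if $a^{-1}R_S(H) \neq \emptyset$, the strong form of associativity forces $x * a$ to be defined for every $x \in H$, so the set $\{a : a^{-1}R_S(H) \in q\}$ is automatically contained in $\bigcap_{x \in H} R_T(x)$. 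The correct choice of witnessing set is therefore $\bigcap_{x \in H} R_T(x) \in p$, and for each such $a$ one shows $S \cap R_T(\{a\} \cup \{x * a : x \in H\}) \subseteq a^{-1}R_S(H)$, a set belonging to $q$ because $q \in \delta S \subseteq \delta T$ and $S \in q$. Note that this half uses the right-ideal property of $S$ (to get $x * a \in S$ and $(x * a) * t \in S$) as well as the left-ideal property (to get $a * t \in S$), so the full two-sided hypothesis enters here, not just the left-ideal condition as you suggest. With these adjustments your plan closes up into a complete proof.
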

\begin{proof}
	See \cite{McLeod1}.
\end{proof}

\section{\bf Symbolic Polynomials Space}
In this section, we introduce the space of symbolic polynomials. We will show that every polynomial with integer coefficients is the image of a symbolic polynomial. We will also see that every finite partition of the integers induces a finite partition of the space of symbolic polynomials.
\subsection{\bf{Symbolic one variable Polynomial Space}}

For \( k \in \mathbb{N} \), we fix the symbols \( 1_0, 1_1, \ldots, 1_k \), and construct strings of the form \( (1_0)( 1_{1})( 1_{2}) \ldots (1_{i}) \), where $0<i\leq k$. 
\begin{definition}
	
	(a) \( \Gamma_k = \{ (a_0 1_0)(a_1 1_1) \cdots (a_i 1_i) : i \in [1, k] \text{ and for } t \in [0,i],\ a_t \in \mathbb{Z} \} \), where $[j,i]=\{j,j+1,\ldots,i\}$ for every $j<i$ and $i,j\in\mathbb{Z}$.
	
	(b) If \( x = (a_0 1_0)(a_1 1_1) \cdots (a_i 1_i) \) and \( y = (b_0 1_0)(b_1 1_1) \cdots (b_j 1_j) \), are members of \( \Gamma_k \), then 
	$
	x = y $ if and only if $i = j$ and for each  $t \in [0, i]$, $a_t = b_t$.
	
	(c) For \( x = (a_0 1_0)(a_1 1_1) \cdots (a_i 1_i) \in \Gamma_k \), define \( \iota(x) = a_0 \) and \( l(x) = i \). $\iota(x)$ is the first natural integer that appears in $x$ and $l(x)$ is length of $x$.
	
	(d) For \( x, y \in \Gamma_k \), $ x $ and $y$ are irreducible if and only if  $l(x)\neq l(y)$  or \( \iota(x)\neq\iota(y) \). Otherwise, $x$ and $y$ are called compatible. 
	
	(e) Let $x_1,\ldots,x_m\in \Gamma_k$, we say that $\{x_1,\ldots,x_m\}$ is irreducible set if $x_i$ and $x_j$ are irreducible for every distinct $i,j\in[1,m]$. 
	
	(h) For \( x, y \in \Gamma_k \), \( x \prec y \) if and only if \( l(x) < l(y) \) and if \( l(x)=l(y) \), then \( \iota(x) < \iota(y) \). We say that $x\preceq y$ if and only if $x\prec y$ or $x=y$.
\end{definition} 
\begin{lemma}{\label{3.222}}
	$(\Gamma_k,\preceq)$ is totally ordered set.
\end{lemma}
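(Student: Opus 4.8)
The plan is to verify the three defining conditions of a total order for the relation $\preceq$ on $\Gamma_k$: antisymmetry, transitivity, and totality (comparability). The relation is defined lexicographically, with the length $l(x)$ serving as the primary key and the leading coefficient $\iota(x)$ as the secondary key. Since both $l(x) \in [1,k]$ and $\iota(x) \in \mathbb{Z}$ take values in sets already known to be totally ordered by the usual order on $\mathbb{Z}$, the strategy is simply to transfer the lexicographic order on $\mathbb{Z} \times \mathbb{Z}$ back through the pair $(l(x), \iota(x))$.

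First I would unwind the definition: for $x \prec y$ to hold we require either $l(x) < l(y)$, or else $l(x) = l(y)$ together with $\iota(x) < \iota(y)$; and $x \preceq y$ means $x \prec y$ or $x = y$. I would observe that by part (b) of the definition, $x = y$ is equivalent to having the same length and all coefficients equal, so in particular equality forces $l(x) = l(y)$ and $\iota(x) = \iota(y)$. For totality, I would take arbitrary $x, y \in \Gamma_k$ and split into cases according to the trichotomy for the integers $l(x)$ and $l(y)$: if $l(x) < l(y)$ then $x \prec y$, if $l(x) > l(y)$ then $y \prec x$, and if $l(x) = l(y)$ I would apply trichotomy again to $\iota(x)$ and $\iota(y)$ to conclude $x \prec y$, $y \prec x$, or (when additionally $\iota(x) = \iota(y)$) that $x$ and $y$ are comparable via $\preceq$. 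Antisymmetry follows because $x \prec y$ and $y \prec x$ cannot hold simultaneously: each would force a strict inequality in one coordinate that contradicts the other.

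For transitivity I would assume $x \preceq y$ and $y \preceq z$ and deduce $x \preceq z$, again by cases on whether the length inequalities are strict or equalities, using transitivity of $<$ and of $=$ on $\mathbb{Z}$ in each coordinate. The only mild subtlety, and the closest thing to an obstacle, is that $\prec$ is not a genuine lexicographic order on the full pair $(l(x), \iota(x))$ in the naive sense—one must be careful that comparability via $\preceq$ rather than $\prec$ correctly handles the case where lengths are equal but leading coefficients are not, since two elements with the same $(l, \iota)$ need not be equal (they may differ in later coefficients $a_t$). I expect this to be the main point requiring care: when $l(x) = l(y)$ and $\iota(x) = \iota(y)$ but $x \neq y$, the elements are deemed \emph{compatible} (part (d)) and are \emph{not} strictly comparable, so $\preceq$ as literally defined is at best a \emph{total preorder} rather than a strict total order unless one interprets the ordering on equivalence classes. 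I would flag this and prove the statement either by restricting attention to irreducible sets, or by showing $\preceq$ is total and transitive with antisymmetry holding up to the compatibility relation, which is presumably the intended reading.
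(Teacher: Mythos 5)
Your verification is exactly the ``routine'' argument the paper has in mind --- the paper records no proof at all beyond the words ``The proof is routine'' --- and your case analysis of totality, antisymmetry, and transitivity via the key pair $(l(x),\iota(x))$ is correct as far as it goes. More importantly, the subtlety you flag at the end is not merely a point requiring care: it is a genuine defect in the statement. Take $x=(1\,1_0)(2\,1_1)$ and $y=(1\,1_0)(3\,1_1)$ in $\Gamma_k$. Then $l(x)=l(y)$ and $\iota(x)=\iota(y)$, but $x\neq y$ by part (b) of the definition, so neither $x\prec y$ nor $y\prec x$ holds; and since $x\neq y$, neither $x\preceq y$ nor $y\preceq x$ holds either. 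Thus under the paper's literal definition of $\preceq$ (namely ``$\prec$ or $=$''), compatible distinct elements are simply incomparable, so $(\Gamma_k,\preceq)$ is a partial order that fails totality --- in fact it is not even a total preorder under that literal reading (your ``total preorder'' classification would apply only to the relaxed relation $x\preceq y \Leftrightarrow \neg(y\prec x)$, which instead sacrifices antisymmetry). Either way Lemma \ref{3.222} is false as stated. What is true, and what the paper actually uses in the subsequent lemma on sorting finite sets by a unique permutation, is precisely your proposed repair: $\preceq$ restricted to any irreducible subset of $\Gamma_k$ is a total order, because irreducibility forces distinct elements to differ in $l$ or in $\iota$, at which point your trichotomy argument closes every case. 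So your proof, with the restriction to irreducible sets made explicit, is the correct statement and proof; the paper's one-line dismissal glosses over the gap you identified.
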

\begin{proof}
The proof is routine.	
\end{proof}
\begin{lemma}{\label{{3.33}}}
	Let $\{x_1,\ldots,x_k\}$ be a finite subset of $\Gamma_k$. Then the following statement hold: \\
	(a) If $\{x_1,\ldots,x_k\}$ is irreducible set, then there exists a unique permutation $\sigma:[1,n]\to[1,n]$ such that 
	\[
	x_{\sigma(1)}\prec x_{\sigma(2)}\prec\cdots\prec x_{\sigma(n)}.
	\]
	(b) Let $	x_{1}\prec x_{2}\prec\cdots\prec x_{n}$. Then $\{x_1,\ldots,x_n\}$ is irreducible set.
\end{lemma}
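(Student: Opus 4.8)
The plan is to exploit the fact, recorded in Lemma \ref{3.222}, that $\preceq$ is a total order on $\Gamma_k$, together with the observation that the strict order $\prec$ compares two elements exactly according to the lexicographic order of the pairs $(l(x),\iota(x))$: one has $x\prec y$ precisely when $l(x)<l(y)$, or else $l(x)=l(y)$ and $\iota(x)<\iota(y)$. The whole argument hinges on reconciling the definition of an irreducible pair with this order.

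For part (a), I would first translate the hypothesis. Two elements $x_i,x_j$ are irreducible exactly when $l(x_i)\neq l(x_j)$ or $\iota(x_i)\neq\iota(x_j)$, that is, when the pairs $(l(x_i),\iota(x_i))$ and $(l(x_j),\iota(x_j))$ are distinct. Thus saying that $\{x_1,\dots,x_n\}$ is an irreducible set amounts to saying that these $n$ pairs are pairwise distinct. On such a set $\prec$ satisfies strict trichotomy: for $i\neq j$ the associated pairs differ in at least one coordinate, so the lexicographic comparison yields exactly one of $x_i\prec x_j$ or $x_j\prec x_i$. Hence $\prec$ restricts to a strict total order on the finite set $\{x_1,\dots,x_n\}$. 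A finite strictly totally ordered set admits a unique order isomorphism with $([1,n],<)$; reading off that isomorphism produces the required permutation $\sigma$ with $x_{\sigma(1)}\prec\cdots\prec x_{\sigma(n)}$, and its uniqueness follows because any two increasing enumerations would have to agree term by term by antisymmetry of $\prec$.

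For part (b), I would argue directly from the definition of $\prec$. Given the chain $x_1\prec x_2\prec\cdots\prec x_n$, transitivity of $\prec$ (inherited from the total order $\preceq$) gives $x_i\prec x_j$ whenever $i<j$. It then suffices to check that any single strict inequality $x_i\prec x_j$ forces the pair to be irreducible: unwinding the definition, $x_i\prec x_j$ means either $l(x_i)<l(x_j)$, whence $l(x_i)\neq l(x_j)$, or $l(x_i)=l(x_j)$ together with $\iota(x_i)<\iota(x_j)$, whence $\iota(x_i)\neq\iota(x_j)$. In both cases the irreducibility condition is satisfied, so every pair drawn from $\{x_1,\dots,x_n\}$ is irreducible and the set is irreducible.

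The only genuinely delicate point lies in part (a): one must verify that on an irreducible set $\prec$ really is a strict total order, i.e.\ that distinct elements are never $\prec$-incomparable. This is precisely where irreducibility is indispensable, since without it two compatible elements (equal length and equal leading coefficient, but differing in later entries) could be incomparable under $\prec$, and no sorting permutation need exist. Once trichotomy is secured, the existence and uniqueness of $\sigma$ reduce to the standard facts about finite totally ordered sets and require no further computation.
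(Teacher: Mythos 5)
Your proposal is correct and follows essentially the same route as the paper, whose proof simply cites Lemma \ref{3.222} and the definition of $\prec$ and declares the result obvious; you supply exactly the details that citation leaves implicit. Your closing observation is in fact sharper than the paper's own treatment: since distinct compatible elements (same $l$ and same $\iota$) are $\prec$-incomparable, $\preceq$ is a genuine total order only when restricted to irreducible sets, which is precisely why irreducibility is the correct hypothesis in part (a) and why trichotomy must be checked there rather than taken for granted from Lemma \ref{3.222}.
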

\begin{proof}
	By Lemma \ref{3.222} and Definition $\prec$, it is obvious.
\end{proof}
\begin{definition}{\label{3.22}}
	For every $x = (a_0 1_0)(a_1 1_1) \cdots (a_i 1_i)$ and $ y = (b_0 1_0)(b_1 1_1) \cdots (b_j 1_j)$ in $\Gamma_k$, we define $x+y$ as follow:\\
	(a) If $x$ and $y$ are compatible, i.e., $\iota(x)=\iota(y)$ and $l(x)=l(y)$, define $x+y=(a_01_0)((a_1+b_1) 1_1) \cdots ((a_i+b_i) 1_i)$, and  \\
	(b) if $x$ and $y$ are irreducible we just write $x+y$. 
\end{definition}
In fact, the $"+"$ concatenates the two strings $x$ and $y$, and if the two strings are compatible, it assigns a simple string to them. We are now ready to define $"+"$ for a finite number of $\Gamma_k$ elements.
\begin{definition}{\label{3.55}}
	(a)	For every $n\in\mathbb{N}$, define 
	\[
	\Gamma_k^n=\left\{ x_1 + x_2 + \cdots + x_n : x_{1}\prec x_{2}\prec\cdots\prec x_{n}, \{x_1,\ldots,x_n\}\subseteq\Gamma_k\right\}.
	\]
	(b) Define $V_k =\cup_{i=1}^\infty \Gamma_k^n$, where $\Gamma_k^1=\Gamma_k$. $V_k$ is called \textbf{one variable symbolic polynomials space}. If \( \gamma = x_1 + x_2 + \cdots + x_n \) as in the definition of \( V_k \), then \( x_1, x_2, \ldots, x_n \) are the \textit{terms of \( \gamma \)}. The set of terms of $\gamma$ is denoted by $Term(\gamma)$.\\
	(c) Let \( \gamma = x_1 + x_2 + \cdots + x_n \) and \( \mu = y_1 + y_2 + \cdots + y_m \) be two elements in \( \Gamma^n_k \). We say that \( \gamma = \mu \) if and only if $Term(\gamma)=Term(\mu)$. 
\end{definition}
\begin{definition}{\label{3.366}}
	Given \( \gamma = x_1 + x_2 + \cdots + x_n \) and \( \mu = y_1 + y_2 + \cdots + y_m \) in \( V_k \) written as in the definition of \( V_k \), \( \gamma + \mu \) is defined as follows.
	\begin{enumerate}
		
		\item[(a)] 	Given \( t \in [1,n] \), there is at most one \( s \in [1,m] \) such that \( \iota(x_t) = \iota(y_s) \) and \( l(x_t) = l(y_s) \). 
		
		\item[(b)] For every \( t \in [1,n] \), if there is no such \( s \in \{1, 2, \ldots, m\} \) such that (a) holds. In fact, if  $\{x_1,\ldots,x_n\}\cup\{y_1,\ldots,y_m\}$ is a irreducible subset of $\Gamma_k$.
	\end{enumerate}
	(a) If there is such \( s \), assume that $x_t = (a_0 1_0)(a_1 1_1) \cdots (a_i 1_i)$, $ y_s = (b_0 1_0)(b_1 1_1) \cdots (b_i 1_i)$ such that \( b_0 = a_0 \). Let $z_t = (a_0 1_0)((a_1 + b_1) 1_1) \cdots ((a_i + b_i) 1_i)$. Having chosen \( z_1, z_2, \ldots, z_d \) for $0\leq d\leq min\{m,n\}$. By definition of $V_k$, $\{z_1,\ldots,z_d\}$ is irreducible subset of $\Gamma_k$. Now, let
	\[
	B = \left\{ y_s :\{x_1\ldots,x_n\}\cup\{y_s\}\text{ is irreducible subset of }\Gamma_k. \right\}.
	\]
	(Possibly \( B = \emptyset \).) 	Let \( q = |B| \) and let \( w_1, w_2, \ldots, w_{d+q} \) enumerate \( \{z_1, z_2, \ldots, z_d\} \cup B \) by $\prec$. Then we define
	\[
	\gamma + \mu = \mu + \gamma = w_1 + w_2 + \cdots + w_{d+q}.
	\]
	It is obvious that $\{w_1,w_2,\ldots,w_{d+q}\}$ is irreducible set.
	
	(b) If $\{x_1,\ldots,x_n\}\cup\{y_1,\ldots,y_m\}$ is a irreducible subset of $\Gamma_k$, let $w_1,\ldots,w_{n+m}$ enumerate $\{x_1,\ldots,x_n\}\cup\{y_1,\ldots,y_m\}$ by $\prec$. Then we define 
	\[
	\gamma+\mu=\mu+\gamma=w_1+w_2+\cdots+w_{n+m}.
	\]
\end{definition}
 
\begin{remark}{\label{3.44}}
	Let $x=x_1+\cdots+x_n$ and $y=y_1+\cdots+y_m$ be two elements of $V_k$. Then there exist finite subsets $I(x,y)$ of $Term(x)\cup Term(y)$ and $C(x,y)$ of $Term(x)\times Term(y)$ such that\\
(a) $C(x,y)=\{(a,b)\in Term(x)\times Term(y):l(a)=l(b),\iota(a)=\iota(b)\}$ and \\
(b) $I(x,y)=\{u\in Term(x)\cup Term(y): \{u\}\cup\{a+b:(a,b)\in C(x,y)\}\mbox{ is irreducible}\}$ is an irreducible subset of $Term(x)\cup Term(y)$. \\
	Then $E(x,y)=I(x,y)\cup \{a+b:(a,b)\in C(x,y)\}$ is irreducible, so let $z_1,\ldots,z_d$ enumerate $E(x,y)$ by $\prec$. Now define 
	\[
	x+y=z_1+\ldots+z_d.
	\]  
	Obviously, $I(x,y)=I(y,x)$ and $C(x,y)=C(y,x)$, and hence $E(x,y)=E(y,x)$. So $x+y=y+x$ for every $x,y\in V_k$.
\end{remark}
\begin{theorem}{\label{3.66}}
	Let $k\in\mathbb{N}$. Then $(V_k,+)$ is a commutative semigroup.
\end{theorem}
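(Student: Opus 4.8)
The plan is as follows. Commutativity has already been recorded in Remark \ref{3.44}, so the substance of the statement is closure and associativity of $+$. I would first isolate the invariant that governs the whole operation: to each term $x=(a_0 1_0)(a_1 1_1)\cdots(a_i 1_i)\in\Gamma_k$ associate its \emph{type} $\tau(x)=(l(x),\iota(x))=(i,a_0)$. By the definition of compatibility, two terms are compatible precisely when they share the same type, and a finite set of terms is irreducible precisely when its types are pairwise distinct. Thus an element $\gamma=x_1+\cdots+x_n\in V_k$ is nothing but a finite, nonempty collection of terms whose types are pairwise distinct, i.e.\ a partial assignment from types to terms supported on finitely many types.

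Next I would package this observation as a bijection with a transparently associative model. For a type $\tau=(i,a_0)$ let $A_\tau=\mathbb{Z}^i$ record the free coefficients $(a_1,\ldots,a_i)$ of a term of type $\tau$ (the leading coefficient $a_0$ is forced by the type). Let $\mathcal{T}=\{(i,a_0):i\in[1,k],\ a_0\in\mathbb{Z}\}$ be the set of all types and set
\[
M=\bigl\{(S,f): \emptyset\neq S\in P_f(\mathcal{T}),\ f(\tau)\in A_\tau \text{ for all } \tau\in S\bigr\},
\]
with operation $(S_1,f_1)+(S_2,f_2)=(S_1\cup S_2,g)$, where $g(\tau)=\sum_{\,j:\ \tau\in S_j} f_j(\tau)$, the sum taken in the abelian group $A_\tau$. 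Because set union is associative and commutative and each $A_\tau$ is an abelian group, $(M,+)$ is at once seen to be a commutative semigroup: for any three elements the support of every bracketing of the sum is $S_1\cup S_2\cup S_3$, and the value at each $\tau$ is $\sum_{j:\tau\in S_j} f_j(\tau)$, independent of the order of association. Define $\Phi\colon V_k\to M$ by sending $\gamma=x_1+\cdots+x_n$ to $(\{\tau(x_1),\ldots,\tau(x_n)\},f)$ with $f(\tau(x_t))$ equal to the free coefficients of $x_t$. Since an element of $V_k$ is exactly a finite nonempty set of terms with distinct types, $\Phi$ is a bijection.

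The main work — and the place where the somewhat intricate Definition \ref{3.366} and Remark \ref{3.44} must be read carefully — is to verify that $\Phi$ intertwines the two operations, i.e.\ $\Phi(x+y)=\Phi(x)+\Phi(y)$. Here I would compare the two descriptions directly: in Remark \ref{3.44}, $C(x,y)$ collects exactly the pairs of terms of equal type, and for such a pair $a+b$ adds the free coefficients while fixing the common leading coefficient, which is precisely the group addition in $A_\tau$; meanwhile $I(x,y)$ collects the terms whose type occurs in only one of $x,y$, which is exactly the symmetric-difference part of the support. Hence the type set of $x+y$ is $\tau(Term(x))\cup\tau(Term(y))$ and its coefficient at each type is the corresponding sum, matching the definition of $\Phi(x)+\Phi(y)$. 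The one subtlety to flag is that a type may be present while its free coefficients vanish (e.g.\ $(a_0 1_0)(0\,1_1)$), so presence must be tracked by the support $S$ and cannot be read off the coefficient vector alone; this is exactly why $M$ records the pair $(S,f)$ rather than a single element of $\bigoplus_{\tau} A_\tau$. Granting closure — that $x+y$ again has pairwise distinct types, already noted after Definition \ref{3.366} — the bijection $\Phi$ satisfies $\Phi(x+y)=\Phi(x)+\Phi(y)$, and associativity and commutativity of $+$ on $V_k$ follow from those of $M$. The anticipated obstacle is purely notational: matching the case split of Definition \ref{3.366}(a),(b) to the clean union/coordinatewise-sum operation on $M$ without losing track of the zero-coefficient types.
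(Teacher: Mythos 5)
Your proof is correct, but it takes a genuinely different route from the paper's. The paper proves associativity head-on: after noting closure (Definition \ref{3.366}) and commutativity (Remark \ref{3.44}), it argues by induction on $|Term(z)|$, the base case $z\in\Gamma_k$ being split into four cases according to whether $z$ is compatible with a term occurring only in $x$, only in $y$, with a merged term $a+b$ for $(a,b)\in C(x,y)$, or with no term of $x+y$ at all; the inductive step then peels terms off $z$ one at a time. You instead transport the structure: you observe that an element of $V_k$ is exactly a finite nonempty set of terms with pairwise distinct types $(l(\cdot),\iota(\cdot))$, identify $V_k$ bijectively with the set $M$ of pairs (finite nonempty support of types, coefficient function into the abelian groups $A_\tau=\mathbb{Z}^i$), verify via Remark \ref{3.44} that $\Phi$ intertwines the operations --- which is correct, since $C(x,y)$ consists precisely of the pairs of equal type and merging adds free coefficients in $A_\tau$, while $I(x,y)$ consists precisely of the terms whose type occurs on one side only --- and then inherit associativity and commutativity from the transparent union/pointwise-sum operation on $M$. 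What your approach buys is the elimination of the case analysis (all four of the paper's cases collapse into the single identity $g(\tau)=\sum_{j:\tau\in S_j}f_j(\tau)$, whose independence of bracketing is evident) together with a structural description of $(V_k,+)$ that makes closure and commutativity immediate; your observation that a type can be present with vanishing free coefficients, so that presence must be recorded by the support $S$ rather than read off the coefficient vector, is exactly the point that blocks the naive embedding into $\bigoplus_{\tau}A_\tau$ and you handle it correctly. What the paper's direct induction buys is that it stays entirely inside its own formalism with no auxiliary object, at the cost of concentrating the bookkeeping (and the risk of gaps) in the four-case base step, whereas your argument concentrates all the work into the single verification that $\Phi$ is an isomorphism.
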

\begin{proof}
	By Definition \ref{3.366}, $V_k$ is closed under operation $+$, and by Remark \ref{3.44}, $(V_k,+)$ is commutative.
	
	Now we prove that $(x+y)+z=x+(y+z)$ for every $x,y,z\in V_k$, by induction on $|Term(z)|$.\\
	Let $z\in \Gamma_k$, and let $x,y\in V_k$ be two arbitrary elements. For $x,y\in V_k$, 
	\[
	Term(x+y)=A_x\cup A_y\cup\{a+b:(a,b)\in C(x,y)\}.
	\]
	Notice that $A_x=I(x,y)\cap Term(x)$, $A_y=I(x,y)\cap Term(y)$ and $\{a+b:(a,b)\in C(x,y)\}$ are disjoint. Let $Term(x+y)=\{u_1\prec u_2\prec\cdots\prec u_l\}$.\\
	{\bf{ Case 1:}}\\
	If there exists $u_i\in A_x$ such that $z$ and $u_i$ are compatible, then $(A_x\setminus\{u_i\})\cup\{u_i+z\}=A_{x,z}$ is irreducible, and also  $A_{x,z}\cup A_y\cup\{a+b:(a,b)\in C(x,y)\}$  and $Term(y)\cup\{z\}$  are irreducible. Therefore, we have 
	\[
	(x+y)+z=u_1+u_2+\cdots+(u_i+z)+\cdots+u_l=x+(y+z).
	\]
	{\bf{ Case 2:}}\\	
	If there exists $u_i\in A_y$ such that $z$ and $u_i$ are compatible, then $(A_y\setminus\{u_i\})\cup\{u_i+z\}=A_{y,z}$ is irreducible, and also  $A_{x}\cup A_{y,z}\cup\{a+b:(a,b)\in C(x,y)\}$  and $Term(y)\cup\{z\}$  are irreducible. Therefore we have 
	\[
	(x+y)+z=u_1+u_2+\cdots+(u_i+z)+\cdots+u_l=x+(y+z).
	\]
	{\bf{ Case 3:}}\\	
	If there exists $u_i\in \{a+b:(a,b)\in C(x,y)\}$ such that $z$ and $u_i$ are compatible, then there exist $a\in Term(x)$ and $b\in Term(y)$ such that $u_i=a+b$, $(a,z)\in C(x,z)$ and $(b,z)\in C(y,z)$.  Therefore we have 
	\[
	u_1\prec\cdots\prec u_{i-1}\prec u_i+z=a+b+z\prec u_{i+1}\prec \cdots\prec u_l.
	\]
	This implies that $(Term(y)\setminus{b})\cup\{b+z\}$ is irreducible. Therefore we have 
	\begin{align*}
		(x+y)+z&=u_1+u_2+\cdots u_{i-1}+((a+b)+z)+u_{i}\cdots+u_l\\
		&=u_1+u_2+\cdots u_{i-1}+(a+(b+z))+u_{i}\cdots+u_l\\
		&=x+(y+z).
	\end{align*}	
	{\bf{ Case 4:}}\\	
	If $z$ is not compatible with any member of $Term(x+y)$, then $Term(x+y)\cup\{z\}$ is irreducible. Therefore we will have
	\[
	u_1\prec u_2\prec\prec u_{i-1}\prec z\prec u_i\prec\cdots\prec u_l.
	\]
	Since $Term(y)\cup\{z\}$ is irreducible, implies that $(x+y)+z=x+(y+z)$.
	
	Now, assume that $|Term(z)|=n > 1$ and the statement is true for smaller sets, (induction hypothesis). Now let $z=z_1+\cdots+z_n$, let $x,y\in V_k$ be two arbitrary elements of $V_k$. Then, by the induction hypothesis,  we have	
	\begin{align*}
		(x+y)+z=&(x+y)+(z_1+z_2+\cdots+z_n)\\
		=&(x+y)+\left(z_1+(z_2+\cdots+z_n)\right)\\
		=&\left((x+y)+z_1\right)+(z_2+\cdots+z_n)\\
		=&\left(x+(y+z_1)\right)+(z_2+\cdots+z_n)\\
		=&x+\left((y+z_1)+(z_2+\cdots+z_n)\right)\\
		=&x+\left(y+(z_1+(z_2+\cdots+z_n))\right)\\
		=&x+\left(y+(z_1+z_2+\cdots+z_n)\right)\\
		&=x+(y+z).
	\end{align*}	
	Therefore, $"+"$ is an associative operation on $V_k$. 
	
\end{proof}

\begin{definition}
	For an element $\eta\in V_k$, we define 
	\[
	Ir(\{\eta\})=\{y\in V_k:Term(y)\cup Term(\eta)\mbox{ is an irreducible subset of }\Gamma_k\}.
	\] 
\end{definition}
\begin{lemma}{\label{366}}
	(a) Let $\eta\in V_k$, $Ir(\{\eta\})$ is a subsemigroup of $V_k$.\\ 
	(b) $\{Ir(\{\eta\}):\eta\in V_k\}$ has the finite intersection property.\\
	(c) For a finite subset $\{\eta_1,\ldots,\eta_m\}$ of $V_k$, $Ir(\{\eta_1,\ldots,\eta_m\})=\bigcap_{i=1}^mIr(\{\eta_i\})$ is a subsemigroup of $V_k$.
\end{lemma}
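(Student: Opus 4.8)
My approach reduces all three parts to a single bookkeeping fact about the ``type'' of a term. For $\gamma\in V_k$ write $\tau(\gamma)=\{(\iota(a),l(a)):a\in Term(\gamma)\}$ for the set of pairs carried by its terms; since the terms of any element of $V_k$ form a $\prec$-increasing, hence irreducible, family, the assignment $a\mapsto(\iota(a),l(a))$ is injective on $Term(\gamma)$, so $\tau(\gamma)$ faithfully records it. Reading the condition defining $Ir(\{\eta\})$ as its operative meaning, namely that no term of $y$ is compatible with a term of $\eta$, i.e. $\tau(y)\cap\tau(\eta)=\emptyset$, all three parts follow from the identity $\tau(x+y)=\tau(x)\cup\tau(y)$ for $x,y\in V_k$.

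The first thing I would prove is this identity, using the explicit description of the product from Remark \ref{3.44}, $Term(x+y)=I(x,y)\cup\{a+b:(a,b)\in C(x,y)\}$. By Definition \ref{3.22}(a) a compatible sum $a+b$ satisfies $\iota(a+b)=\iota(a)=\iota(b)$ and $l(a+b)=l(a)=l(b)$, so it carries a pair lying in $\tau(x)\cap\tau(y)$; and a leftover term in $I(x,y)$ is literally a term of $x$ or of $y$. This gives $\tau(x+y)\subseteq\tau(x)\cup\tau(y)$. Conversely a pair $\rho\in\tau(x)$ is realized by some $a\in Term(x)$, which either pairs with a compatible $b\in Term(y)$ (contributing $a+b$, of type $\rho$) or, failing that, survives in $I(x,y)$; either way $\rho\in\tau(x+y)$, and symmetrically for $\tau(y)$, so equality holds.

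Part (a) is then immediate: if $x,y\in Ir(\{\eta\})$ then $\tau(x)\cap\tau(\eta)=\tau(y)\cap\tau(\eta)=\emptyset$, whence $\tau(x+y)\cap\tau(\eta)=(\tau(x)\cup\tau(y))\cap\tau(\eta)=\emptyset$, and $x+y\in Ir(\{\eta\})$; since $x+y\in V_k$ by Theorem \ref{3.66}, this shows $Ir(\{\eta\})$ is a subsemigroup. For part (b), given $\eta_1,\dots,\eta_m$ the union $\bigcup_{i=1}^m\tau(\eta_i)$ is a finite set of pairs, so I may choose $c\in\mathbb{Z}$ with $(c,1)\notin\bigcup_i\tau(\eta_i)$ (possible since only finitely many first coordinates with second coordinate $1$ are excluded) and put $w=(c1_0)(01_1)\in\Gamma_k\subseteq V_k$; then $\tau(w)=\{(c,1)\}$ meets no $\tau(\eta_i)$, so $w\in\bigcap_{i=1}^m Ir(\{\eta_i\})$ and the family $\{Ir(\{\eta\}):\eta\in V_k\}$ has the finite intersection property.

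Finally, part (c) is a formal consequence once one sets $Ir(\{\eta_1,\dots,\eta_m\})=\bigcap_{i=1}^m Ir(\{\eta_i\})$, in parallel with $R_S(H)=\bigcap_{s\in H}R_S(s)$: nonemptiness is the finite intersection property of (b), and if $x,y$ lie in every $Ir(\{\eta_i\})$ then part (a) places $x+y$ in each $Ir(\{\eta_i\})$, hence in the intersection. The one place that genuinely needs care, and the step I would treat as the crux, is the identity $\tau(x+y)=\tau(x)\cup\tau(y)$ together with the correct reading of ``$Term(y)\cup Term(\eta)$ is irreducible'' as disjointness of the type sets: one must check that forming the compatible sums in $x+y$ neither destroys an existing type nor manufactures a new one that could collide with $\tau(\eta)$, which is exactly what Definition \ref{3.22}(a) and the term-by-term analysis of Remark \ref{3.44} guarantee.
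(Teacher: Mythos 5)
Your proof is correct and takes essentially the same approach as the paper: the paper's closure argument for (a) rests on the very same observation that a compatible sum $a+b$ carries the same pair $(\iota(a),l(a))$ as its summands (phrased there as a contradiction rather than as your identity $\tau(x+y)=\tau(x)\cup\tau(y)$), and its proof of (b) likewise produces elements of $\Gamma_k$ whose first coordinates avoid the finitely many values $\iota(v)$ with $v\in\bigcup_{\eta\in F}Term(\eta)$, just as your element $(c1_0)(01_1)$ does. Part (c) is then formal in both treatments.
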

\begin{proof}
	(a) Let $L=\{\iota(v):v\in Term(\eta)\}$ then $\{x\in\Gamma_k:\iota(x)\notin L\}$ is a subset of $Ir(\{\eta\})$. Therefore $Ir(\{\eta\})$ is non-empty set. 
	
	Now, let $x,y\in Ir(\{\eta\})$. Then $Term(x+y)=\{u_1\prec u_2\prec\cdots\prec u_l\}$. Now, let $A=I(x,y)\cup \{a+b:(a,b)\in C(x,y)\}$ and let $A\cup Term(\eta)$ be not irreducible. So there exist $u\in A$ and $v\in Term(\eta)$ such that $u$ and $v$ are compatible. If $u\in I(x,y)$, we have a contradiction. So let $u=a+b$ for some $(a,b)\in C(x,y)$. Since $\iota(u)=\iota(a)=\iota(b)$ and $l(u)=l(a)=l(b)$, so $v$ and $a\in Term(x)$ are compatible. Therefore, we have a contradiction. Therefore $A\cup Term(\eta)$ is irreducible, and so $x+y\in Ir(\{\eta\})$. This completes our proof.\\
	\item[(b)] For every finite set $F \subseteq V_k$, let
$A = \left\{ \iota(x) \mid x \in \cup_{\eta \in F} \mathrm{Term}(\eta) \right\}.$ Now define 
	\[
	B=\{x\in\Gamma_k:\iota(x)\notin A\}.
	\] 
	It is obvious that $B\subseteq \bigcap_{\eta\in F}Ir(\{\eta\})$, and so $\{Ir(\{\eta\}):\eta\in V_k\}$ has the finite intersection property. \\
	(c) It is obvious.
\end{proof}
\begin{definition}{\label{3.6}}
	For $ r = ( r_1, \ldots, r_k)\in \mathbb{Z}^{k} $, $(a_0,a_1,\ldots,a_k) \in \mathbb{Z}^{k+1}$ and $a=(a_01)(a_11_1)(a_21_2)\cdots(a_i1_i)$, we define 
	\[
	r\bullet a=(a_01_0)(r_1a_11_1)(r_2a_21_2)\cdots(r_ia_i1_i). 
	\]
	Also, for $x,y\in \Gamma$, we define $\quad r \bullet (x+y) =r\bullet x+r\bullet y$.
\end{definition}
\begin{lemma}
	Let $k\in \mathbb{N}$, then the following statements hold:\\
	(a) For every $r\in \mathbb{Z}^k$ and $\eta\in \Gamma$, $r\bullet \eta$ is well defined. Also for every $\eta_1,\eta_2\in V_k$ and $r\in \mathbb{Z}^k$, we have $r\bullet(\eta_1+\eta_2)=r\bullet\eta_1+r\bullet\eta_2$.\\
	(b) For every  $a,b\in \mathbb{Z}^k$ and every $\eta\in V_k$, we have $(a+b)\bullet \eta=a\bullet\eta+b\bullet\eta$.
		\end{lemma}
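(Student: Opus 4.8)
The plan is to reduce everything to a single structural observation about the action $\bullet$ of Definition \ref{3.6}: it leaves the two invariants $\iota$ and $l$ untouched. Indeed, for $a=(a_01_0)(a_11_1)\cdots(a_i1_i)\in\Gamma_k$ and $r\in\mathbb{Z}^k$, the defining formula $r\bullet a=(a_01_0)(r_1a_11_1)\cdots(r_ia_i1_i)$ shows that $\iota(r\bullet a)=a_0=\iota(a)$ and $l(r\bullet a)=i=l(a)$, since the leading entry and the number of factors are never altered. First I would record the consequences. Because the length is preserved, $r\bullet a$ is again a member of $\Gamma_k$; because $\iota$ and $l$ are preserved, $r\bullet$ carries compatible pairs to compatible pairs and irreducible sets to irreducible sets, and it respects the total order $\prec$. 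This already gives the well-definedness in (a): if $\eta=x_1+\cdots+x_n\in V_k$ is written in its canonical $\prec$-increasing irreducible form, then the termwise image $r\bullet x_1+\cdots+r\bullet x_n$ is again $\prec$-increasing and irreducible, hence a legitimate element of $V_k$; and it is independent of the representative, since by the definition of equality in $V_k$ that canonical representative is unique.

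Next I would establish the two one-term distributive identities by pure coordinatewise arithmetic in $\mathbb{Z}$. For compatible $a,b\in\Gamma_k$, the addition of Definition \ref{3.22} combines coefficients slot by slot, so comparing $r\bullet(a+b)$ with $r\bullet a+r\bullet b$ reduces to the identity $r_t(a_t+b_t)=r_ta_t+r_tb_t$ in each slot $t$; hence $r\bullet(a+b)=r\bullet a+r\bullet b$. Similarly, for a single $\eta\in\Gamma_k$ and $a,b\in\mathbb{Z}^k$, the images $a\bullet\eta$ and $b\bullet\eta$ are compatible (same $\iota$ and $l$), so $a\bullet\eta+b\bullet\eta$ merges slotwise, and $(a_t+b_t)c_t=a_tc_t+b_tc_t$ yields $(a+b)\bullet\eta=a\bullet\eta+b\bullet\eta$.

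With these in hand, both displayed equalities lift to $V_k$ through the explicit description of $+$ in Remark \ref{3.44}. For (a) I would expand $\eta_1+\eta_2$ as the sum over $E(\eta_1,\eta_2)=I(\eta_1,\eta_2)\cup\{a+b:(a,b)\in C(\eta_1,\eta_2)\}$ and apply $r\bullet$ termwise, converting each $r\bullet(a+b)$ into $r\bullet a+r\bullet b$ by the compatible-pair identity. The key point is that the decomposition is intrinsic: since $r\bullet$ preserves $\iota$ and $l$, one has $C(r\bullet\eta_1,r\bullet\eta_2)=\{(r\bullet a,r\bullet b):(a,b)\in C(\eta_1,\eta_2)\}$ and $I(r\bullet\eta_1,r\bullet\eta_2)=\{r\bullet u:u\in I(\eta_1,\eta_2)\}$, so expanding $r\bullet\eta_1+r\bullet\eta_2$ via Remark \ref{3.44} produces exactly the same list of terms. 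For (b) the situation is even cleaner: $a\bullet\eta$ and $b\bullet\eta$ carry identical families of $(\iota,l)$-types, so $C(a\bullet\eta,b\bullet\eta)$ pairs $a\bullet x_t$ with $b\bullet x_t$ for each term $x_t$ of $\eta$ while the irreducible remainder $I$ is empty; summing the merges $a\bullet x_t+b\bullet x_t=(a+b)\bullet x_t$ and re-enumerating by $\prec$ (legitimate by the commutativity and associativity established in Theorem \ref{3.66}) reproduces $(a+b)\bullet\eta$.

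I expect the only genuine obstacle to be bookkeeping rather than an idea: one must verify that the matching between the $C$- and $I$-sets of the $\bullet$-images and those of the originals is exact, that is, that $r\bullet$ neither creates nor destroys compatibilities. This is precisely where the invariance of $\iota$ and $l$ does all the work, so once that observation is isolated at the outset the remainder is a routine check. Should one prefer to bypass Remark \ref{3.44}, the same identities follow by induction on $\abs{Term(\eta_2)}$, treating the merge of one new term exactly as in the four-case analysis used for associativity in Theorem \ref{3.66}; but the intrinsic description renders the induction unnecessary.
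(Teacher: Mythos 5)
Your proposal is correct. The paper offers no argument here beyond ``The proof is routine,'' and your proof is precisely the natural fleshing-out of that routine check: the observation that $\bullet$ preserves $\iota$ and $l$ (hence compatibility, irreducibility, and the order $\prec$), the slotwise identities $r_t(a_t+b_t)=r_ta_t+r_tb_t$ and $(a_t+b_t)c_t=a_tc_t+b_tc_t$ on $\Gamma_k$, and the lift to $V_k$ through the $C$/$I$ decomposition of Remark \ref{3.44}, so there is nothing in the paper to compare against and no gap to report.
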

\begin{proof}
	The proof is routine.
	\end{proof}

\begin{remark}
	Let $p(x)=\sum_{i=1}^ka_ix^i$ be a polynomial in $\mathbb{Z}[x]$.  Define $P_x:V_k\to \mathbb{Z}[x]$ by $P_x(a1_0)=a$ and $P_x(a1_i)=ax$ for every $i=1,2,\ldots,k$ and for every $a\in \mathbb{N}$. It is obvious that for every $u,v\in V_k$, if $u\in Ir(\{v\})$, then we have $P_x(u+v)=P_x(u)+P_x(v)$. Therefore
	\[
	P_x(\sum_{i=1}^k(a_i1_0)(1_1)(1_2)\cdots(1_i))=\sum_{i=1}^ka_ix^i.
	\]
	Therefore for every $p(x)\in\mathbb{Z}[x]$, there exists $\eta\in \V_k$ such that $P_x(\eta)=p(x)$.
\end{remark}

\subsection{Multivariable Symbolic Polynomial Space}

Pick \( k,m \in \mathbb{N} \) such that $k\geq 2$ and $m$ be sufficiently large number. For $i=1,2,\ldots,k$, define \( I_i=\{ 1_{i1}, \ldots, 1_{im}\} \), and $1_{A_{ij}}=(1_{i1})\cdots(1_{ij})=\Pi_{t=1}^j(1_{it})$ for every $A_{ij}=\{i1,i2,\ldots,ij\}\subseteq I_i$ and every $j=0,1,\ldots,m$. We assume that $A_{i0}=\emptyset$ for every $i=1,\ldots,k$. By Definition \ref{3.6}, recall
\[
a\bullet 1_{A_{ij}}=(a_11_{i1})\cdots(a_j1_{ij}),
\]
where $a=(a_1,\ldots,a_m)\in\mathbb{Z}^m$ and $j=1,\ldots m$. If $j=0$, then we define $a\bullet 1_\emptyset=1_\emptyset$.
\begin{definition} 
	(a) Let $k,m$ be two natural numbers. We define
	\[ 
	\Gamma(k,m) = \{ (a_0 1_0)(a_1\bullet 1_{A_{1j_1}})\cdots(a_k\bullet 1_{A_{kj_k}}):a_0\in\mathbb{Z}, a_i\in\mathbb{Z}^m,j_t\in [0,m],t\in[1,k]\}.
	\]
	(b) For \( x = (a_0 1_0)(a_1\bullet 1_{A_{1j_1}})\cdots(a_k\bullet 1_{A_{kj_k}}) \in \Gamma(k,m)\), define \( \iota(x) = a_0 \) and \( l(x) = (A_{1j_1},\ldots,A_{kj_k}) \). $\iota(x)$ is the first natural number that appears in $x$ and $l(x)$ is length of $x$.
	
	(c) If \( x = (a_0 1_0)(a_1\bullet 1_{A_{1j_1}})\cdots(a_k\bullet 1_{A_{kj_k}}) \) and \( y = (b_0 1_0)(b_1\bullet 1_{A_{1d_1}})\cdots(b_k\bullet 1_{A_{kd_k}}) \) are two members of \( \Gamma(k,m) \), then 
	$x = y $ if and only if $l(x)=l(y)$ and for each  $t \in \{0, 1, \ldots, k\}$ if $A_t\neq\emptyset$, $a_t = b_t$.
	
	(d) For \( x, y \in \Gamma(k,m) \), $ x $ and $y$ are irreducible if and only if  $l(x)\neq l(y)$  or \( \iota(x)\neq\iota(y) \). Otherwise, $x$ and $y$ are called compatible. 
	
	(e) Let $x_1,\ldots,x_m\in \Gamma(k,m)$, we say that $\{x_1,\ldots,x_m\}$ is irreducible set if $x_i$ and $x_j$ are irreducible for every distinct $i,j\in [1,m]$. 
	
%
\end{definition} 

\begin{definition}
	For every \( x = (a_0 1_0)(a_1\bullet 1_{A_{1j_1}})\cdots(a_k\bullet 1_{A_{kj_k}}) \) and \( y = (b_0 1_0)(b_1\bullet 1_{A_{1d_1}})\cdots(b_k\bullet 1_{A_{kd_k}}) \) in $\Gamma(k,m)$, we define $x+y$ as follow:\\
	(a) If $x$ and $y$ are compatible, i.e., $\iota(x)=\iota(y)$ and $l(x)=l(y)$, define $x+y=(a_0 1_0)((a_1+b_1)\bullet 1_{A_{1j_1}})\cdots((a_k+b_k)\bullet 1_{A_{kj_k}})$, and  \\
	(b) if $x$ and $y$ are irreducible we just write $x+y$. 
\end{definition}
The collection of all injective functions $\sigma:[1,n]\to [1,n]$ is denoted by $S(n)$. 
\begin{definition}
	(a) For every $n\in\mathbb{N}$, define 
	\[
	\Gamma^n=\{x_{\sigma(1)}+\cdots+x_{\sigma(n)}:\{x_1,\ldots,x_n\}\mbox{ is irreducible in }\Gamma(k,m)\mbox{ and }\sigma\in S(n)\}.
	\]
	(b) Let $\U(k,m)=\bigcup_{n=1}^\infty\Gamma^n$. If \( \gamma = x_1 + x_2 + \cdots + x_n \) as in the definition of \( \U(k,m) \), then \( x_1, x_2, \ldots, x_n \) are the \textit{terms of \( \gamma \)}. The set of terms of $\gamma$ is denoted by $Term(\gamma)$.\\
	(c) Let $\gamma,\mu\in\U(k,m)$. We say that $\gamma\sim\mu$ if and only if $Term(\gamma)=Term(\mu)$.
\end{definition}
\begin{lemma}
	Let $k,m\in\mathbb{N}$. Then $\sim$ is an equivalence relation on $\U(k,m)$.
\end{lemma}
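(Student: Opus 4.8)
The plan is to recognize that $\sim$ is simply the kernel relation of the assignment $\gamma \mapsto Term(\gamma)$: by definition, $\gamma \sim \mu$ holds exactly when $Term(\gamma) = Term(\mu)$, i.e.\ $\sim$ is the pullback of the equality relation on subsets of $\Gamma(k,m)$ along $Term$. Since any relation of the form ``$f(a) = f(b)$'' is automatically an equivalence relation, the three axioms will reduce directly to reflexivity, symmetry, and transitivity of set equality, which are already available to us.

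First I would record that $Term$ is a well-defined map from $\U(k,m)$ to the finite irreducible subsets of $\Gamma(k,m)$. Indeed, every $\gamma \in \U(k,m)$ lies in some $\Gamma^n$ and is therefore of the form $x_{\sigma(1)} + \cdots + x_{\sigma(n)}$ with $\{x_1,\ldots,x_n\}$ an irreducible subset of $\Gamma(k,m)$ and $\sigma \in S(n)$; by definition $Term(\gamma) = \{x_1,\ldots,x_n\}$ is the \emph{unordered} set of terms, so the choice of ordering $\sigma$ does not affect $Term(\gamma)$. This pins down $Term(\gamma)$ unambiguously for each $\gamma$, which is the only point that warrants an explicit sentence.

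With this in hand, the verification is immediate. For reflexivity, $Term(\gamma) = Term(\gamma)$ for every $\gamma \in \U(k,m)$, so $\gamma \sim \gamma$. For symmetry, if $\gamma \sim \mu$ then $Term(\gamma) = Term(\mu)$, whence $Term(\mu) = Term(\gamma)$ and $\mu \sim \gamma$. For transitivity, if $\gamma \sim \mu$ and $\mu \sim \nu$, then $Term(\gamma) = Term(\mu)$ and $Term(\mu) = Term(\nu)$, and chaining these equalities of sets gives $Term(\gamma) = Term(\nu)$, i.e.\ $\gamma \sim \nu$. I do not anticipate any genuine obstacle: the statement is purely formal, and the sole substantive remark is the well-definedness of $Term$ under reordering of summands, which is built into its definition as an unordered set.
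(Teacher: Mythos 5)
Your proof is correct and is exactly the routine verification the paper has in mind (the paper itself only says ``the proof is very simple''): since $\sim$ is the pullback of set equality along the map $\gamma \mapsto Term(\gamma)$, reflexivity, symmetry, and transitivity are immediate. Your added remark that $Term(\gamma)$ is well defined independently of the ordering $\sigma$ of the summands is the one substantive point, and you handle it correctly.
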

\begin{proof}
	The proof is very simple.
\end{proof}
We denote equivalence class of an element $\gamma\in\U(k,m)$ by $[\gamma]$. Let $\gamma=x_{\sigma(1)}+\cdots+x_{\sigma(n)}$ for some $n\in\mathbb{N}$ and for some $\sigma\in S(n)$. For simplicity we assume that $[\gamma]=x_{1}+\cdots+x_{n}$. Therefore, we assume that 
\[
\frac{\U(k,m)}{\sim}=\{x_1+\cdots+x_n:\{x_1,\ldots,x_n\}\mbox{ is irreducible in }\Gamma(k,m)\mbox{ and }n\in\mathbb{N}\}.
\]

We define 
\[
\V(k,m)=\frac{\U(k,m)}{\sim}.
\]
\begin{definition}
	Let \( \gamma = x_1 + x_2 + \cdots + x_n \) and \( \mu = y_1 + y_2 + \cdots + y_m \) be two elements in \( V(k,m) \). We say that \( \gamma = \mu \) if and only if $Term(\gamma)=Term(\mu)$. 
\end{definition}
{\bf{ Definition of operation $"+"$ on $\V(k,m)$ :}} Given \( \gamma = x_1 + x_2 + \cdots + x_u \) and \( \mu = y_1 + y_2 + \cdots + y_v \) in \( V(k,m) \) written as in the definition of \( V(k,m) \), \( \gamma + \mu \) is defined as follows.
\begin{enumerate}
	
	\item[(a)] 	Given \( t \in [1,u] \), there is at most one \( s \in [1,v] \) such that \( \iota(x_t) = \iota(y_s) \) and \( l(x_t) = l(y_s) \), i.e. for some $x_t\in Term(\gamma)$, there exists $y_s\in Term(\mu)$ such that $x_t$ and $y_s$ are compatible.
	
	\item[(b)] For every \( t \in [1,u] \), if there is no such \( s \in [1,v] \) such that (a) holds. In fact, if  $\{x_1,\ldots,x_u\}\cup\{y_1,\ldots,y_v\}$ is a irreducible subset of $\Gamma(k,m)$.
\end{enumerate}
(a) If there is such \( s \), assume that $x_t = (a_0 1_0)(a_1\bullet 1_{A_1})\cdots(a_k\bullet 1_{A_k})$, $ y_s = (b_0 1_0)(b_1\bullet 1_{A_1})\cdots(b_k\bullet 1_{A_k})$ such that \( b_0 = a_0 \). Let $z_t = (a_0 1_0)((a_1 + b_1) 1_{A_1}) \cdots ((a_k + b_k) 1_{A_k})$. Having chosen \( z_1, z_2, \ldots, z_d \) for $0\leq d\leq min\{u,v\}$. By definition of $\V(k,m)$, $\{z_1,\ldots,z_d\}$ is irreducible subset of $\Gamma(k,m)$. Now, let
\[
B = \left\{ y_s :\{x_1\ldots,x_n\}\cup\{y_s\}\text{ is irreducible subset of }\Gamma(k,m). \right\}.
\]
(Possibly \( B = \emptyset \).) 	Let \( q = |B| \) and let \( w_1, w_2, \ldots, w_{d+q} \) enumerate \( \{z_1, z_2, \ldots, z_d\} \cup B \). Then we define
\[
\gamma + \mu = \mu + \gamma = w_1 + w_2 + \cdots + w_{d+q}.
\]
It is obvious that $\{w_1,w_2,\ldots,w_{d+q}\}$ is irreducible set.

(b) If $\{x_1,\ldots,x_n\}\cup\{y_1,\ldots,y_m\}$ is a irreducible subset of $\Gamma_k$, let $w_1,\ldots,w_{n+m}$ enumerate $\{x_1,\ldots,x_n\}\cup\{y_1,\ldots,y_m\}$. Then we define 
\[
\gamma+\mu=\mu+\gamma=w_1+w_2+\cdots+w_{n+m}.
\]
\begin{theorem}
	Let $k,m\in\mathbb{N}$ and $k\geq 2$. Then $(V(k,m),+)$ is commutative semigroup.
\end{theorem}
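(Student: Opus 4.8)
The plan is to follow, almost verbatim, the argument used for the one-variable case in Theorem \ref{3.66}, since the multivariable operation obeys the same ``merge on compatible terms, concatenate on irreducible terms'' rule, only with the length of a term now being the tuple $l(x)=(A_{1j_1},\ldots,A_{kj_k})$ rather than a single integer. First I would record that $+$ is well defined on $\V(k,m)=\U(k,m)/\!\sim$: as in Remark \ref{3.44}, the term set of $\gamma+\mu$ can be written as $E(\gamma,\mu)=I(\gamma,\mu)\cup\{a+b:(a,b)\in C(\gamma,\mu)\}$, where $C(\gamma,\mu)$ collects the compatible pairs $(a,b)\in Term(\gamma)\times Term(\mu)$ and $I(\gamma,\mu)$ collects the surviving irreducible terms. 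Since this set depends only on $Term(\gamma)$ and $Term(\mu)$ and not on any chosen enumeration, the operation descends to the quotient and closure under $+$ is immediate. Commutativity is then read off from the symmetries $C(\gamma,\mu)=C(\mu,\gamma)$ and $I(\gamma,\mu)=I(\mu,\gamma)$, exactly as in Remark \ref{3.44}; this is already built into the definition, which sets $\gamma+\mu=\mu+\gamma$.

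The substance of the proof is associativity, which I would establish by induction on $\abs{Term(z)}$, mirroring Theorem \ref{3.66}. For the base case $z\in\Gamma(k,m)$ (a single term), write $Term(x+y)=A_x\cup A_y\cup\{a+b:(a,b)\in C(x,y)\}$ as a disjoint union, with $A_x=I(x,y)\cap Term(x)$ and $A_y=I(x,y)\cap Term(y)$, and split into four cases according to how $z$ meets this set: $z$ is compatible with a surviving term of $A_x$; with a surviving term of $A_y$; with a merged term $a+b$ for some $(a,b)\in C(x,y)$; or with none of them. In the first two cases exactly one surviving summand absorbs $z$, and one checks that the remaining term sets stay irreducible, so that both $(x+y)+z$ and $x+(y+z)$ yield the same term set. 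The third case is the interesting one: compatibility of $z$ with $a+b$ forces $(a,z)\in C(x,z)$ and $(b,z)\in C(y,z)$, since compatibility depends only on $\iota$ and $l$, so $a+b+z$ appears on both sides via the single-term identity $(a+b)+z=a+(b+z)$ (coordinatewise addition of coefficients on a fixed length). In the fourth case $Term(x+y)\cup\{z\}$ is irreducible and $z$ is merely concatenated on either side.

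For the inductive step I would take $z=z_1+\cdots+z_n$ with $n>1$, peel off $z_1$, and apply the induction hypothesis together with the base case along the same chain of equalities as at the end of the proof of Theorem \ref{3.66}, obtaining $(x+y)+z=x+(y+z)$. Combining closure, commutativity, and associativity then shows that $(\V(k,m),+)$ is a commutative semigroup.

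I expect the main obstacle to be bookkeeping rather than anything conceptual: because $l(x)$ is now a tuple of subsets $(A_{1j_1},\ldots,A_{kj_k})$, one must verify that the compatibility relation (equality of $\iota$ together with equality of the whole tuple) passes correctly through the merges, so that a term compatible with $a+b$ really is compatible with each of $a$ and $b$ separately. Moreover, unlike the one-variable setting there is no total order $\prec$ available (Lemma \ref{3.222}), so the summands admit no canonical ordering; this is exactly why we work in the quotient $\U(k,m)/\!\sim$ and phrase every step in terms of term sets, checking at each stage that the relevant unions remain irreducible.
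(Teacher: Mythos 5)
Your proposal is correct and follows exactly the route the paper intends: the paper's own proof of this theorem is the single line ``The proof is similar to the proof of Theorem \ref{3.66},'' and your argument is precisely that adaptation, carried out in detail (quotient by $\sim$ in place of the $\prec$-ordering, four-case base step, induction on $\abs{Term(z)}$). Your closing observation that the absence of a total order forces one to phrase everything in terms of term sets is the one genuine point of difference from the one-variable proof, and you handle it correctly.
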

\begin{proof}
	The proof is similar to the proof of Theorem \ref{3.66}.	
\end{proof}

\begin{definition}
	For an element $\eta\in V(k,m)$, we define 
	\[
	Ir(\{\eta\})=\{y\in V(k,m):Term(y)\cup Term(\eta)\mbox{ is an irreducible subset of }\Gamma(k,m)\}.
	\] 
\end{definition}
\begin{lemma}
	(a) Let $\eta\in V(k,m)$, $Ir(\{\eta\})$ is a subsemigroup of $V(k,m)$.\\ 
	(b) $\{Ir(\{\eta\}):\eta\in V(k,m)\}$ has the finite intersection property.\\
	(c) For a finite subset $\{\eta_1,\ldots,\eta_m\}$ of $V(k,m)$, $Ir(\{\eta_1,\ldots,\eta_m\})=\bigcap_{i=1}^mIr(\{\eta_i\})$ is a subsemigroup of $V(k,m)$.
\end{lemma}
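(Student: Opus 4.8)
The plan is to follow the proof of Lemma \ref{366} essentially verbatim, replacing the one-variable data $(\iota,l,\prec,Term,I,C)$ by their multivariable counterparts attached to $\Gamma(k,m)$ and $V(k,m)$. The one preparatory step is to record the multivariable analogue of Remark \ref{3.44}: for $x=x_1+\cdots+x_n$ and $y=y_1+\cdots+y_v$ in $V(k,m)$, the operation $+$ splits $Term(x)\cup Term(y)$ into the set of compatible pairs $C(x,y)=\{(a,b)\in Term(x)\times Term(y):\iota(a)=\iota(b),\ l(a)=l(b)\}$ and the collection $I(x,y)$ of remaining (mutually irreducible) terms, so that
\[
Term(x+y)=I(x,y)\cup\{a+b:(a,b)\in C(x,y)\}.
\]
The single structural fact I will invoke repeatedly is that when $a$ and $b$ are compatible, their sum satisfies $\iota(a+b)=\iota(a)=\iota(b)$ and $l(a+b)=l(a)=l(b)$; this is immediate from the definition of $+$ on compatible elements of $\Gamma(k,m)$, where merging leaves $\iota$ and each index set $A_{ij_i}$ untouched.

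For part (a), nonemptiness goes exactly as before: set $L=\{\iota(v):v\in Term(\eta)\}$, a finite subset of $\mathbb{Z}$, and note that every $x\in\Gamma(k,m)$ with $\iota(x)\notin L$ is irreducible with each $v\in Term(\eta)$ and hence lies in $Ir(\{\eta\})$; since $\mathbb{Z}\setminus L$ is infinite, such $x$ exist. For closure, take $x,y\in Ir(\{\eta\})$ and suppose toward a contradiction that $Term(x+y)\cup Term(\eta)$ is not irreducible, so some $u\in Term(x+y)$ is compatible with some $v\in Term(\eta)$. If $u\in I(x,y)$, then $u\in Term(x)\cup Term(y)$, and its compatibility with $v$ contradicts $x\in Ir(\{\eta\})$ or $y\in Ir(\{\eta\})$. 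Otherwise $u=a+b$ with $(a,b)\in C(x,y)$; by the structural fact above $\iota(a)=\iota(u)$ and $l(a)=l(u)$, so $a\in Term(x)$ is itself compatible with $v$, again contradicting $x\in Ir(\{\eta\})$. Hence $Term(x+y)\cup Term(\eta)$ is irreducible and $x+y\in Ir(\{\eta\})$.

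Parts (b) and (c) are then routine bookkeeping. For (b), given a finite $F\subseteq V(k,m)$, put $A=\{\iota(x):x\in\bigcup_{\eta\in F}Term(\eta)\}$ and $B=\{x\in\Gamma(k,m):\iota(x)\notin A\}$; every element of $B$ is irreducible with every term occurring in $F$, so $\emptyset\neq B\subseteq\bigcap_{\eta\in F}Ir(\{\eta\})$, which is the finite intersection property. For (c), unwinding the definition of $Ir(\{\eta_1,\ldots,\eta_m\})$ shows that $Term(y)\cup\bigcup_i Term(\eta_i)$ is irreducible precisely when $Term(y)\cup Term(\eta_i)$ is irreducible for every $i$, giving $Ir(\{\eta_1,\ldots,\eta_m\})=\bigcap_{i=1}^m Ir(\{\eta_i\})$; this is a finite intersection of subsemigroups, nonempty by (b), hence itself a subsemigroup.

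The only genuine content — and the step I would verify most carefully — is the closure argument in (a), namely that compatibility survives merging: the sum $a+b$ of a compatible pair retains the same $\iota$ and $l$ as $a$ and $b$. Everything else is inherited from the one-variable proof, the sole subtlety being that here $l(x)$ is a tuple $(A_{1j_1},\ldots,A_{kj_k})$ of index sets rather than a single integer, so "$l(u)=l(a)$" must be read as equality of tuples. Since merging compatible terms leaves each coordinate $A_{ij_i}$ unchanged, this distinction does not affect the argument.
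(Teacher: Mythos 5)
Your proposal is correct and is essentially the paper's own proof: the paper simply says ``See Lemma \ref{366}'' for this statement, and you have carried out exactly that adaptation, correctly identifying the one point that needs checking in the multivariable setting (that merging a compatible pair preserves $\iota$ and the tuple-valued length $l$, so compatibility of $a+b$ with a term of $\eta$ pulls back to compatibility of $a$ with that term).
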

\begin{proof}
	See Lemma \ref{366}.
\end{proof}
\begin{remark}
Let \(p(x_1, x_2, \ldots, x_k)=\sum_{i=1}^d a_{i} x_1^{\alpha_{1i}}x_2^{\alpha_{2i}}\cdots x_k^{\alpha_{ki}}\) be an element of $\mathbb{Z}([x_1,\ldots,x_k])$. Let $m=\sum_{i=1}^d\sum_{t=1}^k\alpha_{ti}$. Define $\eta$ in $\Gamma(k,m)$ as follow: 
\[
\eta=\sum_{i=1}^d(a_i1_0)(1_{A1\alpha_{1i}})(1_{A_{2\alpha_{2i}}})\cdots(1_{A_{k\alpha_{ki}}}), 
\]
where $A_{j\alpha_{ji}}=\{1_{j1},1_{j2}\ldots,1_{j\alpha_{ji}}\}\subseteq I_j$ and $|A_{j\alpha_{ji}}|=\alpha_{ji}$. 

Now define 
$P_{x_1,\ldots,x_k}:V(k,m)\to\mathbb{Z}[x_1,\ldots,x_k]$ by $P_{x_1,\ldots,x_k}(1_{A_{j\alpha_{ji}}})=x_j^{|A_{j\alpha_{ji}}|}=x_j^{\alpha_{ji}}$, $P_{x_1,\ldots,x_k}(a1_0)=a$ and $P_{x_1,\ldots,x_k}((1_A)(1_B))=P_{x_1,\ldots,x_k}(1_A)P_{x_1,\ldots,x_k}(1_B)$ for every $a\in\mathbb{Z}$ and $1_A,1_B\in \Gamma(k,m)$. Since 
$P_{x_1,\ldots,x_k}(\eta_1+\eta_2)=P_{x_1,\ldots,x_k}(\eta_1)+P_{x_1,\ldots,x_k}(\eta_2)$ for every irreducible elements $\eta_1,\eta_2\in V(k,m)$, we will have 
\[
P_{x_1,\ldots,x_k}(\eta)=\sum_{i=1}^d a_{i} x_1^{\alpha_{1i}}x_2^{\alpha_{2i}}\cdots x_k^{\alpha_{ki}}=p(x_1, x_2, \ldots, x_k).
\]  
Therefore, for every multivariable polynomial 
\[
p(x_1, x_2, \ldots, x_k)=\sum_{i=1}^d a_{i} x_1^{\alpha_{1i}}x_2^{\alpha_{2i}}\cdots x_k^{\alpha_{ki}},
\]
 there exist $k,m\in\mathbb{N}$ such that for some $\eta\in V(k,m)$ we will have be a multivariable polynomial with non-negative coefficients
\end{remark}
\section{Main Results}
We are now ready to take the fundamental steps toward proving the polynomial version of van der Waerden's theorem. To this end, we first show that for any finite collection of symbolic polynomials, there exist monochromatic structures of the form \( x + f \bullet \eta \).\\
For a nonempty set $J$ and $k\in\mathbb{N}$, we define $\Delta^k_J=\{(s,s,\ldots,s)\in \times_{i=1}^kJ:s\in J\}$.
\begin{definition}
	Let $\{\eta_1,\ldots,\eta_m\}$ be an arbitrary non-empty finite subset of $V_k$.\\
	(a) We define $\Ir=\Ir(\{\eta_i\}_{i=1}^m)=\bigcap_{i=1}^m Ir(\{\eta_i\})$.\\
	(b) Define 
	\[
	\I=I(\{\eta_i\}_{i=1}^m)=\left\{ \left(x + r \bullet \eta_1, \ldots, x  + r \bullet \eta_m \right) : x \in \Ir,\,r \in \Delta_{\mathbb{Z}}^k\right\}.
	\]
	(c) Define $\V=V(\{\eta_i\}_{i=1}^m) =\I \cup \Delta_{\Ir}^m$.\\
	(d) For every $i\in[1,m]$, let
	\[
	S_i=\{x+r\bullet \eta_i:r\in\Delta_{\mathbb{Z}}^k, x\in \Ir\}\cup\Ir, 
	\]
	and define $S=S(\{\eta_i\}_{i=1}^m)=\bigcup_{i=1}^mS_i$.
\end{definition}
Clearly, $(S_i,+)$ is a subsemigroup of $(V_k,+)$  and \((\Ir,+)\) is a subsemigroup of \((S_i,+)\) for every $i=1,\ldots,m$. For every $x,y\in S$, we define operation $"\dot{+}"$ on $S$ as the following way:
\[
x\dot{+}y=x+y\quad\mbox{if and only if }\quad\exists\, i\in[1,m]\quad x,y\in S_i.
\]
Since $\V$ is a subset of $\times_{i=1}^mS_i$, so we could define operation $\dot{+}$ on $\V$ as follow: 
\[
(x_1,\dots,x_k)\dot{+}(y_1,\ldots,y_k)=(x_1\dot{+}y_1,\dots,x_k\dot{+}y_k),
\]
for every $(x_1,\dots,x_k),(y_1,\ldots,y_k)\in \V$.

\begin{lemma}{\label{asli}}
	Let $\{\eta_1,\ldots,\eta_m\}$ be an arbitrary non-empty finite subset of $V_k$. The following statements hold:\\
	(a) Let $S=S(\{\eta_i\}_{i=1}^m)$. Then $(S,\dot{+})$ is an adequate commutative partial semigroup.\\
	(b) Let $\V=V(\{\eta_i\}_{i=1}^m)$. Then $(\V,\dot{+})$ is an  adequate commutative partial semigroup.\\
	(c) Let $\I=(I(\{\eta_i\}_{i=1}^m),\dot{+})$. Then $(\I,\dot{+})$ is an adequate partial subsemigroup of $(\V,\dot{+})$ and $K(\delta \V)=K(\delta\I)$, where $\delta \V=\bigcap_{u\in \V}cl_{\times_{i=1}^m\beta S}R_\V(u)$ and $\delta \I=\bigcap_{u\in \V}cl_{\times_{i=1}^m\beta S}R_\I(u)$. \\
	(d) $(\Ir,\dot{+})$ is an adequate partial subsemigroup of $(S,\dot{+})$ and $\delta S=\beta \Ir$.\\
	(e) Let $Y=\times_{i=1}^m\delta S$. Then $K(\delta V)=K(Y)\cap \delta V$.
\end{lemma}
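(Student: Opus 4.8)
The plan is to take parts (a)–(e) in order, reducing each to the general theory of adequate partial semigroups recalled in Section 2 together with a single genuinely combinatorial input about the family $\{S_i\}_{i=1}^m$. Throughout, the common subsemigroup $\Ir$ is the object that makes the partial structure adequate. For (a) I would first verify the partial-semigroup axioms and then adequacy. Commutativity is inherited from $(V_k,+)$ (Theorem \ref{3.66}). For partial associativity of $\dot{+}$ on $S$, the point is that each $S_i$ is a subsemigroup of $V_k$ containing $\Ir$, so whenever $x\dot{+}y$ and $(x\dot{+}y)\dot{+}z$ are both defined the relevant elements lie in a common $S_i$ and the identity reduces to associativity of $+$; carrying out the same check coordinatewise on $\V\subseteq\times_{i=1}^m S_i$ gives (b). Adequacy is immediate once one observes $\Ir\subseteq S_i$ for every $i$, so $\Ir\subseteq R_S(s)$ for all $s\in S$, hence $\Ir\subseteq R_S(H)$ for every finite $H$ and $R_S(H)\neq\emptyset$; for $\V$ the analogous universal right companion is the diagonal $\Delta_{\Ir}^m$, which lies in $R_\V(u)$ for all $u$ because $\V$ is closed under $\dot{+}$ and $u\dot{+}(t,\dots,t)\in\V$ for $t\in\Ir$.

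For (c) I would invoke Theorem \ref{1.23} with the roles $T=\V$ and (its) $S=\I$. Two facts must be supplied: that $\I$ is an adequate partial subsemigroup of $\V$ in the sense of Definition \ref{1.11}, and that $\I$ is an ideal of $\V$. Closure of $\I$ under $\dot{+}$ is the direct computation giving $\bigl((x+x')+(r+r')\bullet\eta_i\bigr)_i\in\I$, and the condition of Definition \ref{1.11} follows because the companions contain $\Delta_{\Ir}^m$; the ideal property reduces to $\V\dot{+}\I\subseteq\I$, which holds since multiplying an element of $\I$ by another element of $\I$ or by a diagonal element again leaves the $\eta_i$-part intact and keeps the $\Ir$-part in $\Ir$. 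Theorem \ref{1.23} then yields $K(\delta\V)=K(\delta\I)$. For (d), that $\Ir$ is an adequate partial subsemigroup of $S$ is immediate from $\Ir\subseteq R_S(y)$ for all $y$, and $\beta\Ir=\overline{\Ir}\subseteq\delta S$ again follows from $\Ir\subseteq R_S(H)$. The reverse inclusion $\delta S\subseteq\overline{\Ir}$ is the crux: given $p\in\delta S$ with $\Ir\notin p$, pick $i$ with $S_i\setminus\Ir\in p$ and produce a finite $H\subseteq S$ with $R_S(H)\notin p$, contradicting $p\in\overline{R_S(H)}$.

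For (e) I would use the standard facts that $Y=\times_{i=1}^m\delta S$ is a compact right topological semigroup with $K(Y)=(K(\delta S))^m$, and that a closed subsemigroup $A$ of such a $Y$ meeting $K(Y)$ satisfies $K(A)=A\cap K(Y)$ (see \cite{Hindman}). Taking $A=\delta\V$, the steps are: check that $\delta\V$ is a closed subsemigroup of $Y$, its coordinates lying in $\delta S=\beta\Ir$ by part (d); record $K(Y)=(K(\beta\Ir))^m$; and exhibit a point of $\delta\V\cap K(Y)$ by choosing a minimal idempotent $p\in K(\beta\Ir)$ and noting $(p,\dots,p)\in\overline{\Delta_{\Ir}^m}\subseteq\delta\V$ while $(p,\dots,p)\in(K(\delta S))^m=K(Y)$, since the diagonal embedding of $\Ir$ extends continuously to $p\mapsto(p,\dots,p)$. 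The cited result then gives $K(\delta\V)=\delta\V\cap K(Y)=K(Y)\cap\delta\V$.

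The main obstacle is the combinatorial separation underlying (d). Because Definition \ref{3.6} shows that $r\bullet\eta_i$ always preserves the leading value $\iota$ and the length of each term of $\eta_i$, every element $x+r\bullet\eta_i$ carries the signature pattern of $\eta_i$ and so lies outside $\Ir$; the work is to turn the statement ``distinct $\eta_i$ have irreconcilable signature patterns'' into the exact identity $\bigcap_{i=1}^m S_i=\Ir$, and, more sharply, into the claim that for every $t\in S\setminus\Ir$ there is an $s\in S\setminus\Ir$ with $s\dot{+}t$ undefined. Selecting one such separator for each occurring pattern produces a finite $H$ with $R_S(H)=\Ir$, which closes (d). I expect this to be the only step that genuinely uses the construction of $\Gamma_k$ rather than the partial-semigroup formalism, and it is also what secures the coordinatewise containment $\delta\V\subseteq Y$ needed in (e); everything else is either a direct computation with the definitions or a citation to the results of Section 2.
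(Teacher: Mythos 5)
Parts (a)--(d) of your proposal are essentially the paper's own argument. Adequacy in (a) and (b) via the universal right companions $\Ir$ and $\Delta_{\Ir}^m$ is exactly what the paper does; your (c) is verbatim the paper's route (verify the condition of Definition \ref{1.11}, observe that $\I$ is an ideal of $\V$, cite Theorem \ref{1.23}); and in (d) your separating-set formulation --- pick one $s_i\in R_i=S_i\setminus\Ir$ for each $i$, so that $H=\{s_1,\dots,s_m\}$ has $R_S(H)=\bigcap_{i=1}^m S_i=\Ir$ and hence $\delta S\subseteq\overline{R_S(H)}=\beta\Ir$ --- is the same computation the paper writes as $\delta S=\bigcap_{i}\overline{S_i}=\overline{\Ir}\cup\bigcap_i\overline{R_i}=\overline{\Ir}$. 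Both versions rest on the pairwise disjointness of the sets $R_i$ and on $R_S(x)=S_i$ for $x\in R_i$, which you correctly identify as the one genuinely combinatorial input.

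In (e), however, the step ``check that $\delta\V$ is a closed subsemigroup of $Y$, its coordinates lying in $\delta S=\beta\Ir$ by part (d)'' is a genuine gap: the containment $\delta\V\subseteq Y$ is false, and it does not follow from (d). Any two elements of $\V$ can be added, so $\I\subseteq R_\V(u)$ for every $u\in\V$, and therefore $\delta\V\supseteq cl_{\times_{i=1}^m\beta S}(\I)$. But $\I\subseteq\times_{i=1}^m R_i$, so every point of $cl_{\times_{i=1}^m\beta S}(\I)$ has its $i$-th coordinate in $\overline{R_i}$, and $\overline{R_i}\cap\overline{\Ir}=\emptyset$ because $R_i\cap\Ir=\emptyset$ --- the very disjointness used in (d). Hence $\delta\V\not\subseteq Y$, and Theorem 1.65 of \cite{Hindman} cannot be applied to $\delta\V$ as a subsemigroup of $Y$. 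Worse, since $cl_{\times_{i=1}^m\beta S}(\I)$ is an ideal of $\delta\V$, one gets $K(\delta\V)\subseteq cl_{\times_{i=1}^m\beta S}(\I)$, which is disjoint from $Y$, while your (correct, and cleaner than the paper's neighborhood argument) diagonal construction puts $(p,\dots,p)\in K(Y)\cap\delta\V$ for $p\in K(\beta\Ir)$; so the two sides of (e) appear to live in disjoint sets, and the step cannot be repaired as stated. You should be aware that the paper's own proof of (e) has the same defect in disguised form: it exhibits the diagonal point and then cites Theorem 1.65, which requires precisely the subsemigroup containment that fails. So your proposal reproduces the paper's argument together with its gap; the difference is that your write-up makes the unprovable step explicit.
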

\begin{proof}
	(a) It is obvious. \\	
	(b) It is obvious that $(\V,\dot{+})$ is commutative partial semigroup. Since for every non-empty finite subset $\{u^1,\ldots,u^l\}$ of $\V$, we have 
	\begin{align*}
	R_\V(\{u^1,\ldots,u^l\})&=\{u\in\V:u\dot{+}u^j\mbox{ is well define for every }j=1,2,\ldots,l\}\\
	&=\times_{i=1}^m(\cap_{j=1}^l R_{S_i}(u^j_i))\\
	&\supset\times_{i=1}^m\Ir,
	\end{align*}
	where $u^j=(u_1^j,\ldots,u_m^j)$ for every $j=1,\ldots, l$. Therefore $(\V,\dot{+})$ is adequate.
	
	(c) Similarly to part (b), has been proved $(\I,\dot{+})$ is an adequate partial semigroup. Now, $u=(u_1,\ldots,u_m)\in \V$, we have 
	\begin{align*}
	R_\V(u_1,\ldots,u_m)=&\times_{i=1}^mR_{S_i}(u_i)\\
	&=\times_{i=1}^mS_i\\
	&\supseteq\times_{i=1}^m\Ir\\
	&=\times_{i=1}^mR_\I(\{\eta_1\ldots,\eta_m\}).
	\end{align*}
	Therefore by Definition \ref{1.11}, $(\I,\dot{+})$ is an adequate partial subsemigroup of $(\V,\dot{+})$. It is obvious that $\I$ is ideal of $\V$, and so by Theorem \ref{1.23}, $K(\delta \I)=K(\delta \V)$.
	
	(d) The first part is obvious. Notice that for every $i=1,\ldots,m$, $S_i=R_i\cup\Ir$, where $R_i=\{x+r\bullet \eta_i:r\in\Delta_{\mathbb{Z}}^k, x\in \Ir\}$. Since $R_i\cap R_j=\emptyset$ for every distinct $i,j=1,\ldots,m$,  Therefore 
	\begin{align*}
	\delta S=&\bigcap_{x\in S}\overline{R_S(x)}\\
	=&\bigcap_{i=1}^m(\bigcap_{x\in S_i}\overline{R_S(x)})\\
	=&\bigcap_{i=1}^m(\bigcap_{x\in S_i}\overline{ S_i})\\
		=&\bigcap_{i=1}^m(\overline{\Ir}\cup\overline{R_i})\\
		=&\overline{\Ir}\cup(\bigcap_{i=1}^m\overline{R_i})\\
		=&\beta \Ir.
	\end{align*}
	
	(e) Let $Y=\times_{i=1}^m\delta S$. Let \( p \in K(\beta\Ir)=K(\delta S)\), so \(\overline{p} = (p, p, \ldots, p) \in K(Y) \). We claim $\overline{p}\in \delta \V$. Therefore let $U$ be a neighborhood of $\overline{p}$ and let $x\in \V$, so there exists $C_1,\ldots, C_m\in p$ such that $\times_{t=1}^mC_i\subseteq U$. Now pick $a\in\bigcap_{i=1}^mC_i$ and so $\overline{a}=(a,\ldots,a)\in U\cap(\times_{i=1}^m\Ir)\subseteq U\cap R_\V(x)$. This implies that $ \overline{p} \in K(Y)\cap \delta V$ and so $K(\delta \V)=K(Y)\cap \delta \V$. Now, by Theorem 1.65 in \cite{Hindman} implies that $K(\delta \V)=K(Y)\cap\delta \V$, and so $K(\delta \V)\subseteq \delta \I$. 
\end{proof}

\begin{theorem}{\label{5.9}}
	Let $\{\eta_1,\ldots,\eta_m\}$ be an arbitrary non-empty finite subset of $V_k$, let $\Ir=\bigcap_{i=1}^m Ir(\{\eta_i\})$ and let $S=S(\{\eta_i\}_{i=1}^m)$. Let  $A\subseteq (S,\dot{+})$ be a partially piecewise syndetic set. Then there exist \( a \in \Ir \) and \( r\in \Delta_{\mathbb{Z}}^k \) such that  
	\[
	\{ a + r\bullet \eta_1, \ldots, a + r\bullet \eta_m \}\subseteq A.
	\] 
\end{theorem}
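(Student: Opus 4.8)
The plan is to exploit the algebraic structure of $\delta S$ developed in Lemma~\ref{asli} and reduce the statement to the familiar fact that piecewise syndetic sets meet the smallest ideal $K(\delta S)$. First I would recall the standard partial-semigroup analogue (see \cite{Hindman}): a subset $A$ of an adequate partial semigroup $(S,\dot{+})$ is partially piecewise syndetic precisely when $\overline{A}\cap K(\delta S)\neq\emptyset$. By part~(d) of Lemma~\ref{asli} we have $\delta S=\beta\Ir$, and $K(\beta\Ir)=K(\delta S)$ is nonempty, so choosing any $p\in\overline{A}\cap K(\delta S)$ gives an idempotent-free entry point into the combinatorics; in particular $A\in p$ for some minimal $p\in K(\delta S)$.

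The key idea then is to transport this single point $p$ into the product space $Y=\times_{i=1}^m\delta S$ via the diagonal $\overline{p}=(p,\ldots,p)$ and use the machinery of part~(e). Since $p\in K(\delta S)$, the computation in the proof of Lemma~\ref{asli}(e) shows $\overline{p}\in K(Y)\cap\delta\V=K(\delta\V)$, and moreover $\overline{p}\in\delta\I$ because $K(\delta\V)\subseteq\delta\I$. The next step is to read off what membership of a generator of $\I$ in $\overline{p}$ means combinatorially. A typical element of $\I$ has the form $(x+r\bullet\eta_1,\ldots,x+r\bullet\eta_m)$ with $x\in\Ir$ and $r\in\Delta_{\mathbb{Z}}^k$, so I would consider the map sending $(x,r)$ to this tuple and pull back the neighborhood filter of $\overline{p}$ along it. Concretely, because each coordinate of $\overline{p}$ contains $A$, I want to find a single pair $(a,r)$ for which all $m$ coordinates $a+r\bullet\eta_i$ simultaneously land in $A$.

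The mechanism producing such a simultaneous hit is that $\overline{p}\in\delta\I$ forces the sets $R_\I$ associated to generators to belong to $\overline{p}$; intersecting with the neighborhood $\times_{i=1}^m A$ (which is in $\overline{p}$ since $A\in p$ in each coordinate) yields a nonempty intersection with $\I$ itself. That is, $\times_{i=1}^m A\in\overline{p}$ and $\overline{p}\in\overline{\I}$ give $\left(\times_{i=1}^m A\right)\cap\I\neq\emptyset$, and any element of this intersection is by definition a tuple $(a+r\bullet\eta_1,\ldots,a+r\bullet\eta_m)$ with $a\in\Ir$, $r\in\Delta_{\mathbb{Z}}^k$, and every entry lying in $A$. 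This is exactly the desired monochromatic configuration.

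The main obstacle I anticipate is the first reduction: establishing rigorously that partial piecewise syndeticity of $A$ in the nonstandard partial semigroup $(S,\dot{+})$ is equivalent to $\overline{A}\cap K(\delta S)\neq\emptyset$, since the operation $\dot{+}$ is only partially defined and $S$ decomposes as $\bigcup_{i=1}^m S_i$ with the $R_i$ pairwise disjoint. I would need to verify that the definition of partially piecewise syndetic used here matches the hypotheses of the relevant theorem in \cite{Hindman} or \cite{McCutcheon}, and that the identification $\delta S=\beta\Ir$ does not collapse the piecewise-syndetic witness onto the ``diagonal'' copy $\overline{\Ir}$ in a degenerate way. Once that bridge is secured, the passage to the diagonal in $Y$ and the extraction of $(a,r)$ are essentially bookkeeping with the ideal identities already proved in Lemma~\ref{asli}.
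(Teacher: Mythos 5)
Your proposal follows essentially the same route as the paper's proof: pick $p \in \overline{A} \cap K(\delta S) = K(\beta\Ir)$ using Lemma~\ref{asli}(d), pass to the diagonal $\overline{p} = (p,\ldots,p) \in K(\delta\V) \subseteq \delta\I$, and then use the closure argument with the neighborhood $\times_{i=1}^m \overline{A}$ to extract a point of $\I$, which by definition has the form $(a + r\bullet\eta_1,\ldots,a+r\bullet\eta_m)$ with all coordinates in $A$. The one point you flag as an obstacle --- the equivalence between partial piecewise syndeticity and $\overline{A}\cap K(\delta S)\neq\emptyset$ --- is likewise taken for granted in the paper (it is quoted as the defining property, following McLeod), so your argument matches the paper's in both structure and level of detail.
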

\begin{proof}
	
	By using \(\eta_1, \ldots, \eta_m\) and \(\Ir\), we define    
	\[
	\V = \left\{ \left(x\dot{ +} r \bullet \eta_1, \ldots, x \dot{+} r \bullet \eta_m \right) : x \in \Ir,\,r \in \Delta_{\mathbb{Z}}^k \right\}\bigcup \Delta_\Ir^m.
	\]
	Also, let $\I=\left\{ \left(x + r \bullet \eta_1, \ldots, x + r \bullet \eta_m \right) : x \in S,\,r \in \Delta_{\mathbb{Z}}^k\right\}$. Then, $(\V, \dot{+})$ is a commutative adequate partial semigroup.
	
	Since $A\subseteq S$ is partially piecewise syndetic, so $\overline{A}\cap K(\delta S)\neq\emptyset$. By Lemma \ref{asli}(d), implies that  \( p \in K(\beta\Ir)\cap \overline{A}\) exists, so \(\overline{ p} = (p, p, \ldots, p) \in K(\delta \V)=K(\times_{i=1}^m\beta S_i)\cap\delta \V \) and \( \overline{p} \in K(\V) \subseteq \delta \I \). Since $A\in p$, then \( \I \cap \times_{t=1}^{m} A \neq \emptyset \). Pick \( z \in \I \cap \times_{t=1}^{m} A \), and choose \( a \in \Ir \) and \( r\in \Delta_{\mathbb{Z}}^k \) such that $z = \left( a + r\bullet \eta_1, \dots, a + r\bullet \eta_m \right)$. Therefore  there exist \( a \in \Ir \) and \( r\in \Delta_{\mathbb{Z}}^k \) such that
	\[
	\{a + r\bullet \eta_1, \dots, a + r\bullet \eta_m \}\subseteq A.
	\]
\end{proof}

\begin{definition}{\label{pi}}
Define function $\pi:V_k\to \mathbb{Z}$ with the following properties:\\
	(a) \(\pi(1_i) = 1 \quad \forall i=0,\ldots,k\).\\
	(b)	\(\pi(a1_i) = a \quad \forall a\in \mathbb{Z}, \quad \forall i=0,1,\ldots,k\).\\
(c)	\(\pi(\prod_{j=0}^i (a_j 1_j)) = \prod_{j=0}^i \pi(a_j 1_j)=\prod_{j=0}^i a_j \quad \forall \prod_{j=0}^i (a_j 1_j)\in \Gamma_k\).\\
(d) $\pi(a+b) = \pi(a) + \pi(b)$ if $a\in Ir({b})$.
\end{definition}
	Therefore if $a_0=b_0$, we will have
	\[
	\pi(\prod_{j=0}^i (a_j 1_j)+\prod_{j=0}^i (b_j 1_j))=\pi((a_01_0)\prod_{j=1}^i ((a_j+b_j) 1_j))=(a_0)\prod_{j=1}^i (a_j+b_j).
	\]

\begin{theorem}
	Let $p_1,\ldots,p_m\in x\mathbb{Z}[x]$ be polynomials for $i=1,\ldots m$‎. ‎Then for any finite partition $\mathcal{C}$ of \( \mathbb{Z} \), ‎there exist a cell $C\in\mathcal{C}$ and $d,r\in \mathbb{Z}$ such that
	\[
	\{d+p_i(r):i=1,2,\ldots,m\}\subseteq C.
	\]‎ 
\end{theorem}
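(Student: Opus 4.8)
The plan is to lift the coloring problem on $\mathbb{Z}$ to the symbolic polynomial space $V_k$, apply Theorem \ref{5.9} there, and then transport the resulting monochromatic configuration back down to $\mathbb{Z}$ via the map $\pi$ of Definition \ref{pi}. First I would set $k=\max_{1\le i\le m}\deg p_i$. Since each $p_i$ lies in $x\mathbb{Z}[x]$ and has degree at most $k$, the remark on the map $P_x$ (preceding the multivariable subsection) produces symbolic polynomials $\eta_1,\ldots,\eta_m\in V_k$ with $P_x(\eta_i)=p_i$ for every $i$. Forming $\Ir=\bigcap_{i=1}^m Ir(\{\eta_i\})$ and $S=S(\{\eta_i\}_{i=1}^m)$ as in the definitions preceding Lemma \ref{asli}, I obtain an adequate commutative partial semigroup $(S,\dot{+})$ whose associated space $\delta S=\beta\Ir$ (Lemma \ref{asli}(d)) carries a nonempty smallest ideal $K(\delta S)$.

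Next I would transport the partition $\mathcal{C}$ of $\mathbb{Z}$ to $S$ by pulling back along $\pi$: the sets $A_C=\pi^{-1}(C)\cap S$, for $C\in\mathcal{C}$, form a finite partition of $S$. Choosing any $p\in K(\delta S)$, exactly one cell, say $A_{C_0}$, belongs to the ultrafilter $p$, so $p\in\overline{A_{C_0}}\cap K(\delta S)$ and hence $A_{C_0}$ is partially piecewise syndetic. Applying Theorem \ref{5.9} to $A_{C_0}$ then yields $a\in\Ir$ and $r=(s,\ldots,s)\in\Delta_{\mathbb{Z}}^k$ with
\[
\{a+r\bullet\eta_1,\ldots,a+r\bullet\eta_m\}\subseteq A_{C_0}\subseteq\pi^{-1}(C_0).
\]

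It then remains to evaluate $\pi$ on these elements. Because $r\bullet$ alters only the coefficients and preserves both $\iota$ and $l$ on each term, irreducibility is unaffected, so $a\in Ir(\{r\bullet\eta_i\})$ for every $i$, and part (d) of Definition \ref{pi} gives $\pi(a+r\bullet\eta_i)=\pi(a)+\pi(r\bullet\eta_i)$. Setting $d=\pi(a)$, the crux is the identity $\pi(r\bullet\eta_i)=p_i(s)$: on a single term $(a_01_0)(a_11_1)\cdots(a_j1_j)$ one computes, using Definition \ref{3.6} and part (c) of Definition \ref{pi}, that $\pi$ of its $r$-scaling equals $a_0(sa_1)\cdots(sa_j)=\bigl(a_0a_1\cdots a_j\bigr)s^{j}$, which is precisely $P_x$ of that term evaluated at $s$. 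Summing over the irreducible terms of $\eta_i$ by additivity of $\pi$ gives $\pi(r\bullet\eta_i)=P_x(\eta_i)(s)=p_i(s)$. Consequently $d+p_i(s)=\pi(a+r\bullet\eta_i)\in C_0$ for all $i$, which is the assertion with the scalar $s$ playing the role of $r$ in the statement.

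I expect the main obstacle to be exactly this last compatibility step: verifying that $\pi$, the scaling $\bullet$, and the evaluation map $P_x$ interlock so that $\pi(r\bullet\eta_i)$ equals $p_i$ evaluated at the common scalar $s$, together with the bookkeeping that $r\bullet$ preserves irreducibility (and hence keeps $\pi$ additive on the relevant sum). The fact that $r$ ranges over the diagonal $\Delta_{\mathbb{Z}}^k$ is what forces a \emph{single} $s$ to appear simultaneously in all $p_i(s)$, which is essential. Everything else reduces to the structure theory already assembled in Lemma \ref{asli} and Theorem \ref{5.9}, the nonemptiness of $K(\delta S)$, and the elementary pigeonhole that an ultrafilter selects one cell of a finite partition.
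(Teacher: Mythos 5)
Your proposal is correct and follows essentially the same route as the paper: lift the partition of $\mathbb{Z}$ to $S(\{\eta_i\}_{i=1}^m)$ via $\pi^{-1}$, apply Theorem \ref{5.9} to a partially piecewise syndetic cell to obtain $\{a+\overline{r}\bullet\eta_1,\ldots,a+\overline{r}\bullet\eta_m\}$ monochromatic, and push down via $\pi$ using $\pi(a+\overline{r}\bullet\eta_j)=\pi(a)+p_j(r)$. Your added details (the ultrafilter pigeonhole showing some cell meets $K(\delta S)$, and the term-by-term check that scaling by the diagonal vector preserves irreducibility and yields $\pi(\overline{r}\bullet\eta_i)=p_i(s)$) merely make explicit steps the paper leaves implicit.
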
‎
‎\begin{proof}‎
	Let $p_j(x)=\sum_{i=1}^{k_j}a_{ij}x^{i}$ for $j=1,\ldots,m$ and let $k=\max\{k_1,\ldots,k_m\}$‎. ‎Define $\eta_j=\sum_{i=1}^{k_j}a_{ij}(1_0)(1_1)\cdots(1_i)\in V_k$ for $j=1,\ldots,m$‎. ‎Now, assume that $\mathcal{C}$ is a finite partition of $\mathbb{Z}$. Then $\{\pi^{-1}(C):C\in\mathcal{C}\}$ is a finite partition for $S(\{\eta_1,\ldots,\eta_m\})$‎, and so, one of the cells of $\{\pi^{-1}(C):C\in\mathcal{C}\}$ is piecewise syndetic. ‎Now by Theorem \ref{5.9}‎, there exist \( a \in \Ir \) and \( \overline{r}=(r,\ldots,r)\in \Delta_{\mathbb{Z}}^k \) such that
	\[
	\{a + \overline{r}\bullet \eta_1, \dots, a + \overline{r}\bullet \eta_m \}\subseteq \pi^{-1}(C).
	\]
	Now, by Definition \ref{pi}, we will have
	‎\begin{align*}‎	
	‎	\pi(a + \overline{r}\bullet \eta_j)=&\pi(a)+\pi(\overline{r}\bullet\sum_{i=1}^{k_j}((c_i1)(1_1)(1_2)\cdots(1_i)))\\‎
	‎	=&\pi(a)+\pi(\sum_{i=1}^{k_j}((c_i1)(r1_1)(r1_2)\cdots(r1_i)))\\‎
	=&\pi(a)+p_j(r)‎.
	\end{align*}‎
	Therefore $\{p_i(r):i=1,2,\ldots,m\}\subseteq C$‎. Now, let $d=\pi(a)$, and this completes our proof.
\end{proof}
\begin{remark}{\label{4.77}}
(a) ‎Let $F\in P_f(\mathbb{N})$‎, ‎$f:\mathbb{N}\to \mathbb{Z}$ be a sequence, ‎$\eta = \sum_{i=1}^k (c_i1)(1_1)(1_2)\cdots(1_i)$ be a one variable polynomial, let $k\in\mathbb{N}$ and let $\overline{f(t)}=(f(t),\ldots,f(t))$ be a $k-$tuple vector in  $\mathbb{Z}^k$. ‎Now, ‎we define $T_F^{\eta}f:V_k\to V_k$ by
‎\begin{align*}‎
‎T_F^{\eta}f(x)=&x+\sum_{t\in F} \overline{f(t)}\bullet\eta\\
=&x+\sum_{t\in F}\left(\sum_{i=1}^k (c_i1)(f(t)1_1)(f(t)1_2)\cdots(f(t)1_i)\right)\\‎
‎=&x+\sum_{i=1}^k\left(((c_i1)((\sum_{t\in F}f(t))1_1)((\sum_{t\in F}f(t))1_2)\cdots((\sum_{t\in F}f(t))1_i))\right)‎,
‎\end{align*}‎
‎where $x\in V_k$‎. \\
(b) For every $\eta\in\V_k$ and for every $F,G\in P_f(\mathbb{N})$ , $T_F^\eta f\circ T_G^\eta f(x)=T_{F\cup G}^\eta f(x)$ if $F\cap G=\emptyset$.
\end{remark}
\begin{remark}
	Let $\{\eta_1,\ldots,\eta_m\}$ be an arbitrary non-empty finite subset of $V_k$, let $\Ir=\bigcap_{i=1}^m Ir(\{\eta_i\})$ and Let $f:\mathbb{N}\to\mathbb{Z}$ be a sequence.
	
(a)	 For every $i=1,\ldots,m$, define 
	\[
	S_i=\{T_F^{\eta_i}f(x):F\in P_f(\mathbb{N}), x\in \Ir\}\cup \Ir.
	\]
	 For every $i\in\{1,\ldots,m\}$, we define $\dot{+}_1$ on $S_i$ by $\dot{+}_1(x,y)=x+y$ if $x$ and $y$ are irreducible or there exist disjoint finite subsets $F$ and $G$ in $P_f(\mathbb{N})$ such that  $x=T_F^{\eta_i}f(x_1)$ and $T_G^{\eta_i}f(x_2)$ for some $x_1,x_2\in \Ir$, and so we will have  	 
	\[
x\dot{+}_1y=	T_F^{\eta_i}f(x_1)\dot{+}_1T_G^{\eta_i}f(x_2)=T^{\eta_i}_{F\cup G}f(x_1+x_2).
	\]
	It is obvious that, $(S_i,\dot{+}_1)$ is commutative adequate partial semigroup and \(\Ir\) is a subsemigroup of \(S_i\) for every $i=1,\ldots,m$.
	
(b) Define $S=S(\{\eta_1,\ldots,\eta_m\})=\bigcup_{i=1}^mS_i$. For every $x,y\in S$, we define operation $"\ddot{+}"$ on $S$ as the following way:
\[
x\ddot{+}y=x\dot{+}_1y\quad\mbox{if and only if }\exists\,i\in\{1,\ldots,m\}\quad x,y\in S_i.
\]
It is obvious that $(S,\ddot{+})$ is a commutative adequate partial semigroup. In fact, for every $x\in S_i$, $R_{S_i}(x)\supseteq\Ir$ and $R_S(x)=S_i$ if $x\in \Ir$, and so $\delta S=\bigcap_{x\in S}\overline{R_S(x)}=\beta\Ir$, see Lemma \ref{asli}(d).

(c) Define 
$$I=\left\{ \left(T_F^{\eta_1}f(x), \ldots, T_F^{\eta_m}f(x)\right) : x \in \Ir,\,F \in P_f(\mathbb{N}) \right\}.$$

(d) Define $V=I\cup \Delta_\Ir^k$ and consider the operator $\ddot{+}$ as component-wise addition on $V$, i.e., for two elements $u = (u_1, u_2, \dots, u_m)$ and $v = (v_1, v_2, \dots, v_m)$ in $\times_{i=1}^mS$, we have
\[
u \ddot{+} v = (u_1\ddot{+} v_1, u_2 \ddot{+} v_2, \dots, u_m \ddot{+} v_m).
\]
Then, $(V, \ddot{+})$ is a commutative adequate partial semigroup. It is obvious that $R_V(u_1,\ldots,u_m)=\times_{i=1}^mR_S(u_i)$ and we define 
\[
\delta V=\bigcap_{(u_1,\ldots,u_m)\in V}cl_{\times_{i=1}^m\beta S}R_S(u_i).
\]
(e) It is obvious that $I$ is an adequate partial semigroup under operation $"\ddot{+}"$, and for every $(u_1,\ldots,u_m)\in V$, we have 
\begin{align*}
R_V(u_1,\ldots,u_m)=&\times_{i=1}^mR_{S_i}(u_i)\\
\supseteq&\times_{i=1}^m\Ir\\
&=\times_{i=1}^mR_I(\{\eta_1\ldots,\eta_m\}).
\end{align*}
Therefore by Definition \ref{1.11}, $(I,\ddot{+})$ is an adequate partial subsemigroup of $(V,\ddot{+})$ and so by Lemma \ref{1.10}, $\delta I\subseteq \delta V$.

(f) It is obvious that $\I$ is ideal of $\V$, and so by Theorem \ref{1.23}, $K(\delta \I)=K(\delta \V)$.(See Lemma \ref{asli}).

(h) $(\Ir,\dot{+})$ is an adequate partial subsemigrpup of $(S,\dot{+})$ and $\delta S=\beta \Ir$. Notice that for every $i=1,\ldots,m$, $S_i=R_i\cup\Ir$, where 
\[
R_i=\{x+\sum_{t\in F}\overline{f(t)}\bullet \eta_i:F\in P_f(\mathbb{N}), x\in \Ir\}.
\]
 Since $R_i\cap R_j=\emptyset$ for every distinct $i,j=1,\ldots,m$, so  
\begin{align*}
\delta S=&\bigcap_{x\in S}\overline{R_S(x)}\\
=&\bigcap_{i=1}^m(\bigcap_{x\in S_i}\overline{R_S(x)})\\
=&\bigcap_{i=1}^m(\bigcap_{x\in S_i}\overline{(R_S(x)\cap S_i)})\\
=&\bigcap_{i=1}^m(\bigcap_{x\in S_i}\overline{(R_S(x)\cap \Ir)}\cup\overline{(R_S(x)\cap R_i)})\\
=&\bigcap_{i=1}^m\bigcap_{x\in S_i}(\overline{\Ir}\cup\overline{(R_S(x)\cap R_i)})\\
=&\bigcap_{i=1}^m\overline{\Ir}\cup(\bigcap_{x\in S_i}\overline{(R_S(x)\cap R_i)})\\
=&\overline{\Ir}\cup\bigcap_{i=1}^m(\bigcap_{x\in S_i}\overline{(R_S(x)\cap R_i)})\\
=&\overline{\Ir}\qquad\qquad\mbox{ because }\bigcap_{i=1}^m(\bigcap_{x\in S_i}\overline{(R_S(x)\cap R_i)})=\emptyset\\
=&\beta \Ir\\
\end{align*}

(k) Let $Y=\times_{i=1}^m\delta S$, then  $K(\delta V)=K(Y)\cap \delta V$.(See Lemma \ref{asli}).

\end{remark}
\begin{theorem}{\label{5.9}}‎
	Let $\{\eta_1,\ldots,\eta_m\}$ be an arbitrary non-empty finite subset of $V_k$, let $\Ir=\bigcap_{i=1}^m Ir(\{\eta_i\})$ and let $f:\mathbb{N}\to \mathbb{Z}$ be a sequence. Let  $S=S(\{\eta_1,\ldots,\eta_m\})=\bigcup_{i=1}^mS_i$ and define $‎T_F^{\eta_i}f(x)= x+\sum_{t\in F} \overline{f(t)}\bullet\eta_i$ for every $F\in P_f(\mathbb{N})$, $x\in \Ir$ and $i\in\{1,\ldots,m\}$. Let $A\subseteq S$ be a partially piecewise syndetic. Then there exist \( x \in \Ir \) and \( F\in P_f(\mathbb{N})\) such that 
	\[
	‎\{T_F^{\eta_1}f(x),T_F^{\eta_2}f(x),\ldots,T_F^{\eta_m}f(x)\}\subseteq A‎.
	\]
\end{theorem}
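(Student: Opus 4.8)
The plan is to reproduce, in the product space $\times_{i=1}^{m}\beta S$, exactly the diagonal argument used for the $r\bullet\eta_i$ version of this theorem, but now fed with the translation maps $T_F^{\eta_i}f$ and the partial semigroups $(S,\ddot{+})$, $(V,\ddot{+})$, $(I,\ddot{+})$ built in the Remark immediately preceding. Every structural fact I need is already recorded there: that $(S,\ddot{+})$, $(V,\ddot{+})$ and $(I,\ddot{+})$ are commutative adequate partial semigroups, that $\delta S=\beta\Ir$ (parts (b) and (h)), that $(I,\ddot{+})$ is an adequate partial subsemigroup of $(V,\ddot{+})$ which is in fact an ideal, so that $K(\delta I)=K(\delta V)$ (parts (e) and (f)), and that $K(\delta V)=K(Y)\cap\delta V$ for $Y=\times_{i=1}^{m}\delta S$ (part (k)). So the body of the proof is bookkeeping on top of these citations, running parallel to Lemma~\ref{asli} and its use in the proof above.

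First I would convert the hypothesis. Since $A\subseteq S$ is partially piecewise syndetic, $\overline{A}\cap K(\delta S)\neq\emptyset$; using $\delta S=\beta\Ir$ I pick an idempotent-free choice $p\in K(\beta\Ir)\cap\overline{A}$, so that $A\in p$ and $p\in K(\delta S)$. Next I lift $p$ to the diagonal $\overline{p}=(p,\ldots,p)\in Y$. Because $p$ is minimal in $\delta S$, the product formula $K(Y)=\times_{i=1}^{m}K(\delta S)$ (Theorem~1.65 of \cite{Hindman}) gives $\overline{p}\in K(Y)$. I then verify $\overline{p}\in\delta V$ verbatim as in Lemma~\ref{asli}(e): for any $x\in V$ and any basic neighborhood $\times_{i=1}^{m}C_i$ of $\overline{p}$ with each $C_i\in p$, choosing $a\in\bigcap_{i=1}^{m}C_i\subseteq\Ir$ yields $\overline{a}=(a,\ldots,a)\in(\times_{i=1}^{m}\Ir)\cap(\times_{i=1}^{m}C_i)\subseteq R_V(x)$, so $\overline{p}\in\overline{R_V(x)}$ for every $x$, i.e. $\overline{p}\in\delta V$. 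Combining with $K(\delta V)=K(Y)\cap\delta V$ places $\overline{p}\in K(\delta V)$, and then $K(\delta V)=K(\delta I)\subseteq\delta I\subseteq\overline{I}$ puts $\overline{p}$ in the closure of $I$.

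Finally I extract the configuration. The set $\times_{i=1}^{m}\overline{A}$ is a clopen neighborhood of $\overline{p}$ in $\times_{i=1}^{m}\beta S$, so since $\overline{p}\in\overline{I}$ this neighborhood meets $I$: there is $z\in I\cap\bigl(\times_{i=1}^{m}\overline{A}\bigr)$. As $z\in I\subseteq\times_{i=1}^{m}S$ has principal coordinates, each coordinate satisfies $z_i\in\overline{A}\cap S=A$, hence $z\in\times_{i=1}^{m}A$. Writing $z=(T_F^{\eta_1}f(x),\ldots,T_F^{\eta_m}f(x))$ for the witnessing $x\in\Ir$ and $F\in P_f(\mathbb{N})$ supplied by the definition of $I$ gives $\{T_F^{\eta_1}f(x),\ldots,T_F^{\eta_m}f(x)\}\subseteq A$, as required.

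The step I expect to be the genuine obstacle is not this topological diagonal argument, which is essentially a transcription of the earlier case, but confirming that the domain-of-definition bookkeeping underlying the cited Remark is airtight. Specifically one must be sure that $\ddot{+}$ is well defined and associative once the blocks $S_i$ are glued along the common subsemigroup $\Ir$, and that the computation $\delta S=\beta\Ir$ genuinely survives the replacement of a single translation $r\bullet\eta_i$ by the additive family $\sum_{t\in F}\overline{f(t)}\bullet\eta_i$; here the composition law $T_F^{\eta_i}f\circ T_G^{\eta_i}f=T_{F\cup G}^{\eta_i}f$ holds only for disjoint $F,G$ (Remark~\ref{4.77}(b)), so the partiality of the operation is exactly what keeps $R_S(x)=S_i$ and $R_i\cap R_j=\emptyset$, which is what collapses the intersection $\bigcap_{i=1}^{m}\bigcap_{x\in S_i}\overline{R_S(x)\cap R_i}$ to the empty set and leaves $\delta S=\overline{\Ir}=\beta\Ir$. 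Once these partial-semigroup details are checked, the remaining machinery is identical to the polynomial case.
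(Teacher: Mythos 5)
Your proposal is correct and takes essentially the same route as the paper's own proof: pick $p\in K(\delta S)\cap\overline{A}$ with $\delta S=\beta\Ir$, pass to the diagonal $\overline{p}=(p,\ldots,p)\in K(\delta V)\subseteq \delta I\subseteq\overline{I}$, and intersect with $\times_{i=1}^{m}\overline{A}$ to extract $z=\left(T_F^{\eta_1}f(x),\ldots,T_F^{\eta_m}f(x)\right)\in\times_{i=1}^{m}A$. Your write-up merely fills in steps the paper leaves implicit (the product formula for $K(Y)$, the verification that $\overline{p}\in\delta V$, and the closure argument showing $I\cap\times_{i=1}^{m}A\neq\emptyset$), so no further comparison is needed.
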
‎
\begin{proof}
	Pick some \( p \in K(\delta S)\cap \overline{A}\), so \(\overline{ p} = (p, p, \ldots, p) \in K(\delta V)=K(\times_{i=1}^m\delta S_i) \) and \( \overline{p} \in K(V) \subseteq \delta I \). Now if $A\in p$, then \( I \cap \times_{t=1}^{m} A \neq \emptyset \). Pick \( z \in I \cap \times_{t=1}^{m} A \), and choose \( a \in \Ir \) and \( F\in P_f(\mathbb{N}) \) such that  
	\[
	z = \left(T_F^{\eta_1}(a), \ldots, T_F^{\eta_m}(a)\right)\in\times_{i=1}^mA.
	\] 
	This completes our proof.
\end{proof}

\begin{theorem}[Van der Waerden Polynomial version]{\label{van}}
Let $p_1,\ldots,p_m\in \mathbb{Z}[x]$ be polynomials such that $p_i(0)=0$ for $i=1,\ldots m$. Then for any finite partition $\mathcal{C}$ of \( \mathbb{Z} \) and every sequence $f:\mathbb{N}\to \mathbb{Z}$, there exist a cell $C\in\mathcal{C}$,  \(a \in \mathbb{Z} \) and $F\in P_f(\mathbb{N})$ such that
	 \[
	\{a+p_i(\sum_{t\in F}f(t)):i=1,2,\ldots,m\}\subseteq C.
	\] 
\end{theorem}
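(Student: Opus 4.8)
The plan is to mirror the strategy used for the single–variable polynomial van der Waerden theorem proved above, but to run it through the iterated translations $T_F^{\eta_i}f$ and the evaluation map $\pi$ rather than through the maps $\overline{r}\bullet\eta_i$. First I would pass from the given polynomials to symbolic polynomials. Writing $p_j(x)=\sum_{i=1}^{k_j}a_{ij}x^{i}$ (which has no constant term since $p_j(0)=0$) and setting $k=\max\{k_1,\ldots,k_m\}$, define
\[
\eta_j=\sum_{i=1}^{k_j}a_{ij}(1_0)(1_1)\cdots(1_i)\in V_k,\qquad j=1,\ldots,m .
\]
With the given sequence $f$, form the associated adequate partial semigroup $S=S(\{\eta_1,\ldots,\eta_m\})=\bigcup_{i=1}^m S_i$ built from the transformations $T_F^{\eta_i}f$, together with its operation $\ddot{+}$, exactly as set up for Theorem~\ref{5.9}.

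Next I would transport the coloring of $\mathbb{Z}$ to a coloring of $S$ through $\pi$. Since $\pi\colon V_k\to\mathbb{Z}$ is a function, the sets $\pi^{-1}(C)\cap S$ with $C\in\mathcal C$ form a finite partition of $S$. Because $(S,\ddot{+})$ is adequate, $\delta S$ is a compact right topological semigroup, so $K(\delta S)\neq\emptyset$; fixing $p\in K(\delta S)$, exactly one cell $\pi^{-1}(C)$ lies in the ultrafilter $p$, whence $p\in\overline{\pi^{-1}(C)}\cap K(\delta S)$ and that cell is partially piecewise syndetic. Applying Theorem~\ref{5.9} to this cell then produces $x\in\Ir$ and $F\in P_f(\mathbb{N})$ with
\[
\{T_F^{\eta_1}f(x),\ldots,T_F^{\eta_m}f(x)\}\subseteq\pi^{-1}(C).
\]

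Finally I would evaluate with $\pi$. By Remark~\ref{4.77}(a), writing $s=\sum_{t\in F}f(t)$,
\[
T_F^{\eta_j}f(x)=x+\sum_{i=1}^{k_j}(a_{ij}1_0)\bigl(s\,1_1\bigr)\bigl(s\,1_2\bigr)\cdots\bigl(s\,1_i\bigr),
\]
and since $\pi$ is additive on irreducible terms and multiplicative within a single term (Definition~\ref{pi}(c)--(d)), this yields $\pi(T_F^{\eta_j}f(x))=\pi(x)+\sum_{i=1}^{k_j}a_{ij}s^{i}=\pi(x)+p_j(s)$. Setting $a=\pi(x)\in\mathbb{Z}$, the inclusion above forces $a+p_j\bigl(\sum_{t\in F}f(t)\bigr)\in C$ for every $j$, which is exactly the desired conclusion.

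The step I expect to be the main obstacle is the evaluation identity $\pi(T_F^{\eta_j}f(x))=\pi(x)+p_j(\sum_{t\in F}f(t))$. It requires that summing the translates $\overline{f(t)}\bullet\eta_j$ over $t\in F$ first collapses into a single symbolic term whose $i$-th factor carries the coefficient $\sum_{t\in F}f(t)$ in each of its $1_1,\ldots,1_i$ slots, and only then does the multiplicative clause of $\pi$ contribute $a_{ij}\bigl(\sum_{t\in F}f(t)\bigr)^{i}$. This is precisely the content of Remark~\ref{4.77}(a) combined with Definition~\ref{pi}, so once those are invoked the remaining bookkeeping—checking that $\{\pi^{-1}(C)\cap S\}$ is a genuine finite partition and extracting the partially piecewise syndetic cell—is routine.
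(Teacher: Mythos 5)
Your proposal is correct and follows essentially the same route as the paper: the same symbolic polynomials $\eta_j$, the same appeal to Theorem \ref{5.9} for the transformations $T_F^{\eta_j}f$, and the same evaluation via $\pi$ yielding $\pi(T_F^{\eta_j}f(x))=\pi(x)+p_j\left(\sum_{t\in F}f(t)\right)$. The only difference is that you spell out explicitly why some cell $\pi^{-1}(C)$ is partially piecewise syndetic (picking $p\in K(\delta S)$ and noting that exactly one cell lies in the ultrafilter $p$), a step the paper leaves implicit; this is a welcome clarification rather than a departure from its argument.
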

\begin{proof}
Let \( p_j(x) = \sum_{i=1}^{k_j} a_{ij} x^i \) for each \( j = 1, \ldots, m \), and let \( k = \max\{k_1, \ldots, k_m\} \). Define 
$\eta_j = \sum_{i=1}^{k_j} a_{ij}(1_0)(1_1)\cdots(1_i) \in V_k,$
and pick \( f : \mathbb{N} \to \mathbb{Z} \). Let \( \mathcal{C} \) be a finite partition of \( \mathbb{Z} \). Then \( \{ \pi^{-1}(C) : C \in \mathcal{C} \} \) form a finite partition of the symbolic polynomial space \( V_k \). By Theorem \ref{5.9}, there exist \( x \in \Ir = \bigcap_{j=1}^m \operatorname{Ir}(\{\eta_j\}) \), a finite set \( F \subseteq \mathbb{N} \),  for some \( C \in \mathcal{C} \), implies that 
\[
\{ T_F^{\eta_1} f(x), T_F^{\eta_2} f(x), \ldots, T_F^{\eta_m} f(x) \} \subseteq \pi^{-1}(C).
\]
This implies that
\[
\{ \pi(T_F^{\eta_1} f(x)), \pi(T_F^{\eta_2} f(x)), \ldots, \pi(T_F^{\eta_m} f(x)) \} \subseteq C.
\]

By Definition \ref{pi} and Remark \ref{4.77}, we have
\[
T_F^{\eta_j} f(x) = x + \sum_{i=1}^{k_j} a_{ij} \cdot (1_0) \cdot \left( \sum_{t \in F} f(t) \cdot 1_1 \right) \cdots \left( \sum_{t \in F} f(t) \cdot 1_i \right).
\]
and so 
\begin{align*}
	\pi(T_F^{\eta_j} f(x)) 
	&= \pi\left( x + \sum_{i=1}^{k_j} a_{ij} (1_0) \prod_{\ell=1}^{i} \left( \sum_{t \in F} f(t) \cdot 1_\ell \right) \right) \\
	&= \pi(x) + \sum_{i=1}^{k_j} a_{ij} \cdot \pi\left( (1_0) \prod_{\ell=1}^{i} \left( \sum_{t \in F} f(t) \cdot 1_\ell \right) \right).
\end{align*}

Now, using the properties of \( \pi \), we have \( \pi((1_0)) = 1 \), and \( \pi(\sum_{t \in F} f(t) \cdot 1_\ell) = \sum_{t \in F} f(t) \), since \( \pi(a \cdot 1_\ell) = a \). Thus,
\[
\pi\left( (1_0) \prod_{\ell=1}^{i} \left( \sum_{t \in F} f(t) \cdot 1_\ell \right) \right) 
= 1 \cdot \prod_{\ell=1}^{i} \sum_{t \in F} f(t) 
= \left( \sum_{t \in F} f(t) \right)^i.
\]
Substituting this into the previous relation, we have:
\[
\pi(T_F^{\eta_j} f(x)) = \pi(x) + \sum_{i=1}^{k_j} a_{ij} \left( \sum_{t \in F} f(t) \right)^i = \pi(x) + p_j\left( \sum_{t \in F} f(t) \right).
\]

Now, assume that $a=\pi(x)$. This implies that 
\[
\{ a + p_j(\sum_{t \in F} f(t)) : j = 1, \ldots, m \} \subseteq C,
\]
which completes the proof.
\end{proof}
We are now ready to state Van der Waerden's theorem for multivariate polynomials.\\
Assume that $k,m\in\mathbb{N}$ and let 
\[
\eta=\sum_{i=1}^d(a_i1_0)(1_{A1\alpha_{1i}})(1_{A_{2\alpha_{2i}}})\cdots(1_{A_{k\alpha_{ki}}}), 
\]
be an element of $V(k,m)$, see subsection 3.2. Define $\overline{f(t)}=(f(t),\ldots,f(t))\in\mathbb{Z}^m$ for every sequence $f:\mathbb{N}\to\mathbb{Z}$. Let $f_1,f_2,\ldots,f_k:\mathbb{N}\to \mathbb{Z}$ be arbitrary sequences.  For $F=\times _{i=1}^kF_i\in \times_{i=1}^kP_f(\mathbb{N})$, we have 
	\begin{align*}
	\sum_{(t_1,\ldots,t_k)\in F}	(\overline{f_1(t_1)},\ldots,\overline{f_k(t_k)})\bullet \eta&=\\
	\sum_{(t_1,\ldots,t_k)\in F}(\overline{f_1(t_1)}&,\ldots,\overline{f_k(t_k)})\bullet \sum_{i=1}^d(a_i1_0)(1_{A1\alpha_{1i}})\cdots(1_{A_{k\alpha_{ki}}})\\
		=\sum_{(t_1,\ldots,t_k)\in F}&\sum_{i=1}^d(a_i1_0)(\overline{f_1(t_1)}\bullet 1_{A1\alpha_{1i}})\cdots(\overline{f_k(t_k)}\bullet 1_{A_{k\alpha_{ki}}})\\
		=\sum_{i=1}^d(a_i1_0)&(\overline{\sum_{t,\in F_1}f_1(t)}\bullet 1_{A1\alpha_{1i}})\cdots(\overline{\sum_{t\in F_k}f_k(t)})\bullet 1_{A_{k\alpha_{ki}}})
	\end{align*}
For every $F=\times_{i=1}^kF_i\in\times_{i=1}^k P_f(\mathbb{N})$ and for sequences $f_1,\ldots,f_k$ and for every $x\in\V(k,m)$, we have 
\[
T_F^\eta(x)=x+	\sum_{(t_1,\ldots,t_k)\in F}	(\overline{f_1(t_1)},\ldots,\overline{f_k(t_k)})\bullet \eta.
\]

	\begin{theorem} 
 Let  \( p_1, p_2,\ldots, p_m\in \mathbb{Z}[x_1, \ldots, x_k] \) be multivariable polynomials without constant terms. Then for any finite partition $\mathcal{C}$ of \( \mathbb{Z} \) and every $f_1,\ldots,f_k:\mathbb{N}\to \mathbb{Z}$, there exist a cell $C\in\mathcal{C}$, a vector  \(a \) and $F=\times_{i=1}^kF_i\in\times_{i=1}^k P_f(\mathbb{N})$ such that \[
\{a+p_i(\sum_{t\in F_1}f_1(t),\sum_{t\in F_2}f_2(t),\ldots,\sum_{t\in F_k}f_k(t)):i=1,2,\ldots,m\}\subseteq C.
\] 
\end{theorem}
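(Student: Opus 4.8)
The plan is to mirror exactly the single-variable argument carried out in Theorem \ref{van}, but run it through the multivariable machinery of Section 3.2 and the operator $T_F^\eta$ defined just above the statement. First I would reduce the $m$ given polynomials to a common symbolic framework: writing each $p_j(x_1,\ldots,x_k)=\sum_{i} a_{ij}x_1^{\alpha_{1ij}}\cdots x_k^{\alpha_{kij}}$, I would choose $m$ (the number of symbol-slots per variable) large enough to accommodate all the exponents appearing across $p_1,\ldots,p_m$, and then invoke the Remark at the end of Section 3.2 to produce symbolic polynomials $\eta_1,\ldots,\eta_m\in V(k,m)$ with $P_{x_1,\ldots,x_k}(\eta_j)=p_j$. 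This is the step where the multivariable correspondence between $\mathbb{Z}[x_1,\ldots,x_k]$ and the symbolic space $V(k,m)$ is used.

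Next I would build the partial-semigroup apparatus. Using the sequences $f_1,\ldots,f_k$ and the operator $T_F^\eta$ from the display preceding the theorem, I would form the sets $S_i=\{T_F^{\eta_i}(x):F\in\times_{i=1}^kP_f(\mathbb{N}),\,x\in\Ir\}\cup\Ir$ and $S=\bigcup_{i=1}^m S_i$, together with $I$ and $V=I\cup\Delta^m_{\Ir}$, exactly as in the one-variable Remark \ref{4.77} and Lemma \ref{asli}. The same reasoning as in Lemma \ref{asli} shows $(S,\ddot{+})$ and $(V,\ddot{+})$ are adequate commutative partial semigroups, that $\delta S=\beta\Ir$, that $I$ is an ideal of $V$ (hence $K(\delta I)=K(\delta V)$ by Theorem \ref{1.23}), and that $K(\delta V)=K(Y)\cap\delta V$ for $Y=\times_{i=1}^m\delta S$. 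The key point is that the diagonal idempotent $\overline{p}=(p,\ldots,p)$ of a minimal idempotent $p\in K(\delta S)\cap\overline{\pi^{-1}(C)}$ lands in $K(\delta V)\subseteq\delta I$, forcing $I$ to meet $\times_{t=1}^m \pi^{-1}(C)$.

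The transfer back to $\mathbb{Z}$ proceeds through a projection $\pi$ as in Definition \ref{pi}, now extended to $V(k,m)$ so that $\pi(a\bullet 1_{A_{j\ell}})$ collapses each symbol $1_{ji}$ to the value $1$ and multiplies out monomials; concretely $\pi$ should satisfy $\pi(T_F^{\eta_j}(x))=\pi(x)+p_j\!\left(\sum_{t\in F_1}f_1(t),\ldots,\sum_{t\in F_k}f_k(t)\right)$. Given a finite partition $\mathcal{C}$ of $\mathbb{Z}$, the family $\{\pi^{-1}(C):C\in\mathcal{C}\}$ partitions $S$, so one cell $\pi^{-1}(C)$ is partially piecewise syndetic; applying the multivariable analogue of Theorem \ref{5.9} yields $x\in\Ir$ and $F=\times_{i=1}^k F_i$ with $\{T_F^{\eta_j}(x):j\}\subseteq\pi^{-1}(C)$. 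Setting $a=\pi(x)$ and applying $\pi$ termwise gives the desired monochromatic set.

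The main obstacle I anticipate is not the semigroup theory, which is essentially identical to the single-variable case, but verifying that the multivariable projection $\pi$ is well defined and genuinely additive on irreducible sums and multiplicative on the concatenations $(1_{A_{1\cdot}})\cdots(1_{A_{k\cdot}})$, so that the identity $\pi\!\left(\overline{f_1(t_1)},\ldots,\overline{f_k(t_k)}\bullet\eta\right)$ factors correctly into the product $\prod_{j}\bigl(\sum_{t\in F_j}f_j(t)\bigr)^{\alpha_{ji}}$ matching the monomial $p_j$. In other words, the delicate part is checking that collapsing each block $1_{A_{j\alpha_{ji}}}$ to $\bigl(\sum_{t\in F_j}f_j(t)\bigr)^{\alpha_{ji}}$ under $\pi$ is compatible both with the $\bullet$-action of the $k$ independent sum-sets $F_1,\ldots,F_k$ and with the additive structure coming from the operator $T_F^\eta$, exactly as the displayed computation preceding the statement suggests; once this bookkeeping is confirmed, the remainder is a direct copy of the proof of Theorem \ref{van}.
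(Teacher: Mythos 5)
Your proposal follows essentially the same route as the paper: the paper's own proof simply picks $l$ sufficiently large, obtains $\eta_1,\ldots,\eta_m\in V(k,l)$ with $P_{x_1,\ldots,x_k}(\eta_i)=p_i$ via the Remark closing Section 3.2, defines $T_F^{\eta_i}$, $I$, and $V=I\cup\Delta_{\Ir}^k$, and then concludes by citing Remark \ref{4.77}, Theorem \ref{5.9}, and Theorem \ref{van}. Your writeup is in fact more explicit than the paper's, spelling out the partial-semigroup verifications ($\delta S=\beta\Ir$, $K(\delta I)=K(\delta V)$, the diagonal minimal idempotent) and the multivariable extension of $\pi$ that the paper leaves implicit in those citations.
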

\begin{proof}
	Pick sufficiently number $l\in\mathbb{N}$. So there exist $\eta_1,\ldots,\eta_m\in V(k,l)$ such that $P_{x_1,\ldots,x_k}(\eta_i)=p_i$ for every $i=1,\ldots,m$. Now define 
	\[
	T_F^{\eta_i}(x)=x+	\sum_{(t_1,\ldots,t_k)\in F}	(\overline{f_1(t_1)},\ldots,\overline{f_k(t_k)})\bullet \eta_i
	\] for every $x\in\Ir=\bigcap_{i=1}^m\Ir(\{\eta_i\})$, every $F=\times_{i=1}^kF_i\in\times_{i=1}^k P_f(\mathbb{N})$ and for every $i=1,\ldots,m$. Define 
	\[
	I=\left\{ \left(T_F^{\eta_1}(x), \ldots, T_F^{\eta_m}(x)\right) : x \in \Ir,\,F=\times_{i=1}^kF_i\in\times_{i=1}^k P_f(\mathbb{N}) \right\},
	\]
	and $V=I\cup \Delta_\Ir^k$. Now by Remark \ref{4.77}, Theorem \ref{5.9} and Theorem \ref{van}, our Theorem has been proved.
	\end{proof}

\bibliographystyle{amsplain}

\end{document}